\documentclass[10pt,reqno,oneside]{amsart}

\usepackage[a4paper,margin=1.1in]{geometry}
\usepackage{amsmath,amssymb,amsthm,mathtools,mathrsfs}
\usepackage{enumitem}
\usepackage{booktabs}
\usepackage{microtype}
\usepackage[dvipsnames]{xcolor}
\usepackage{comment}
\usepackage{cases}
\usepackage[numbers]{natbib}

\makeatletter
\renewcommand{\subsection}{\@startsection{subsection}{2}%
  \z@{\linespacing\@plus.7\linespacing}{.5\linespacing}%
  {\normalfont\scshape}}
\makeatother

\newcommand{\R}{\mathbb{R}}
\newcommand{\Z}{\mathbb{Z}}
\DeclareMathOperator{\diag}{diag}
\newcommand{\Sun}{\mathcal{S}}

\theoremstyle{plain}
\newtheorem{theorem}{Theorem}[section]
\newtheorem{proposition}[theorem]{Proposition}
\newtheorem{lemma}[theorem]{Lemma}
\newtheorem{corollary}[theorem]{Corollary}
\newtheorem{conjecture}{Conjecture}

\theoremstyle{definition}
\newtheorem{definition}[theorem]{Definition}

\theoremstyle{remark}

\usepackage{hyperref}
\usepackage[nameinlink]{cleveref}

\hypersetup{
  colorlinks=true,
  linkcolor=blue,
  citecolor=blue,
  urlcolor=blue
}

\title[]{Pendant paths and integral generalized sun graphs}

\author{Rodrigo O. Braga, Jean Carlo Moraes \and Matheus C. Santos}

\thanks{%
\begin{tabular}{@{}l}
Instituto de Matemática e Estatística, Universidade Federal do Rio Grande do Sul, Porto Alegre, Brazil. \\
\end{tabular}}

\date{}

\begin{document}

\begin{abstract}
A graph is integral if the spectrum of its adjacency matrix consists entirely of integers. We prove that every simple graph having a pendant path with at least three edges has an eigenvalue in $(1,2\cos(\pi/9)]$ and one in $[-2\cos(\pi/9),-1)$, and hence is not integral. This settles a conjecture of Braga, Del-Vecchio and Rodrigues (2021) on integral generalized sun graphs. The argument is matrix-theoretic: adjoining a terminal path on three new coordinates to an arbitrary real symmetric matrix produces the same spectral obstruction, and the positive interval above is optimal in this generality. We then disprove a second conjecture of the same authors, which asserts that the cycle of an integral generalized sun graph other than a cycle has length divisible by four. The graph obtained from a hexagon by attaching $6,6,12,6,6$ pendant vertices to five of its six vertices is integral and has $42$ vertices. We show that it is the smallest member of an infinite family governed by the Pell equation $x^{2}-2k^{2}=-7$, and we compute in closed form the characteristic polynomial of the analogous graphs on an arbitrary even cycle. Integrality within this family forces the cycle to be a square or a hexagon, and the square case yields a second infinite family governed by the Pell equation $k^{2}-2c^{2}=1$.

\bigskip

\noindent{\it 2020 Mathematics Subject Classification:} 05C50, 15A18, 11D09

\noindent\textit{Keywords:} Integral graph, Unicyclic graph, Generalized sun graph, Inertia, Pell equation
\end{abstract}

\maketitle

\section{Introduction}\label{sec:intro}

We say that a graph is \emph{integral} if the spectrum of its adjacency matrix consists entirely of integers. A connected graph containing exactly one cycle is called \emph{unicyclic}.

The notion of integral graph dates back to Harary and Schwenk \cite{harary}, who asked which graphs have integral spectra and already observed that the problem, in its full generality, appeared to be intractable. The reason became clear much later, when Ahmadi, Alon, Blake and Shparlinski \cite{AHMADI2009547} showed that integral graphs are extremely rare: only a fraction of $2^{-\Omega(n)}$ of the graphs on $n$ vertices has an integral spectrum, a bound subsequently improved to $2^{-cn^{3/2}}$ by Costello and Williams \cite{CostelloWilliams2016}. The natural way around this obstacle has been to restrict attention to structured classes, and in this direction a considerable body of results has accumulated. Complete graphs and complete bipartite graphs $K_{p,q}$ with $pq$ a perfect square are integral, as are the hypercubes and many other Cayley graphs. The connected cubic integral graphs are known to be exactly thirteen, as shown independently by Bussemaker and Cvetkovi\'c
\cite{BussemakerCvetkovic1976} and by Schwenk \cite{Schwenk1978}.  Integral trees have attracted particular attention since the work of Watanabe and Schwenk \cite{WatanabeSchwenk1979}, who determined the integral starlike trees, that is, the integral trees with exactly one vertex of degree larger than two. Brouwer \cite{Brouwer2008} listed all integral trees on at most $50$ vertices, Csikv\'ari \cite{Csikvari2010} constructed integral trees of every even diameter, and Ghorbani, Mohammadian and Tayfeh-Rezaie \cite{GhorbaniMTR2012} settled the odd case, so that integral trees exist with arbitrarily large diameter. We refer the reader to the surveys \cite{Balinska} and \cite{Wang} for these and many other results.

Graphs with few cycles are a natural testing ground, since their structure is close to that of trees while their spectra are already substantially harder to control. For unicyclic graphs, Omidi \cite{Omidi2009} established necessary conditions for integrality; these conditions, however, did not lead to any integral unicyclic graph other than the cycles $C_3$, $C_4$ and $C_6$. It is worth stressing how special this situation is: if one replaces the adjacency matrix by the Laplacian, the signless Laplacian or the normalized Laplacian, the corresponding integral unicyclic graphs have all been completely determined \cite{Liu2010,Zhang2017,vanDam2011}, whereas for the adjacency matrix the problem remains open. A further source of information is the eigenvalue location algorithm of Braga, Rodrigues and Trevisan \cite{BragaTrevisan2017}, which computes in linear time the number of eigenvalues of a unicyclic graph in a given real interval; a computer search based on it produced only three integral unicyclic graphs on at most $21$ vertices besides the three cycles above.

The first integral unicyclic graphs that are not cycles were obtained in \cite{BRAGA2021281}, where three infinite families were constructed from distinct particular solutions of a Diophantine equation. The unicyclic graphs considered in that work are obtained by attaching pendant paths to the vertices of a cycle and are referred to as \emph{generalized sun graphs}. Beyond establishing necessary conditions for certain generalized sun graphs to be integral, the authors of \cite{BRAGA2021281} proposed the following two conjectures.

\begin{conjecture}\label{conj:one}
   There is no integral generalized sun graph where a path $P_t$, with $t \geq 3$, is attached to a cycle vertex.
\end{conjecture}

\begin{conjecture}\label{conj:two}
    If a generalized sun graph that is not a cycle is integral, then the order of the generalized sun graph's cycle is a multiple of 4.
\end{conjecture}

In this paper we prove Conjecture~\ref{conj:one} and we show that Conjecture~\ref{conj:two} is false, by exhibiting an infinite family of integral generalized sun graphs whose cycle has order six.

Let us fix the notation, which is the same used in \cite{BRAGA2021281}. We write $P_t$ for the path on $t$ vertices. For a graph $H$ and a vertex $v$ of $H$, \emph{attaching} $P_t$ to $v$ means adding $t$ new vertices $u_1,\ldots,u_t$ together with the edges $vu_1$ and $u_iu_{i+1}$ for $1\leq i\leq t-1$. The path $vu_1\cdots u_t$ is then called a \emph{pendant path of length $t$ at $v$}. More generally, a path $vu_1\cdots u_t$ in a graph $G$ is a pendant path of length $t$ at $v$ if $\deg_G(u_i)=2$ for $1\leq i\leq t-1$ and $\deg_G(u_t)=1$; equivalently, $G$ is obtained from $G-\{u_1,\ldots,u_t\}$ by attaching $P_t$ to $v$. In particular, a pendant vertex is a pendant path of length $1$. We denote by $C_b(n_1P_{t_1},\ldots,n_bP_{t_b})$ the generalized sun graph obtained from the cycle $C_b=v_1v_2\cdots v_bv_1$ by attaching $n_k$ copies of the path $P_{t_k}$ to the cycle vertex $v_k$, for $1\leq k\leq b$, and we abbreviate this to $C_{b,t}(n_1,\ldots,n_b)$ when all the attached paths have the same length $t$. In particular, $C_{b,1}(n_1,\ldots,n_b)$ is the cycle $C_b$ with $n_k$ pendant vertices attached to $v_k$.

The two conjectures are treated by completely different methods, which is reflected in the organization of the paper.

Section~\ref{sec:conj1} is devoted to Conjecture~\ref{conj:one}. Let $\rho=2\cos(\pi/9)$. Theorem~\ref{thm:long-pendant-paths} proves, in a considerably stronger form, that every simple graph carrying a pendant path with at least three edges has an eigenvalue in $(1,\rho]$ and one in $[-\rho,-1)$, and hence cannot be integral. In particular, it recovers the fact, due to Watanabe and Schwenk \cite{WatanabeSchwenk1979}, that every arm of an integral starlike tree has length at most two. The proof compares the inertia of two shifted matrices by means of Schur complements and Sylvester's law of inertia. It uses no property of the part of the adjacency matrix outside the last three vertices except real symmetry. We therefore obtain the same conclusion for an arbitrary real symmetric matrix to which a terminal path on three coordinates is adjoined, and Proposition~\ref{prop:sharp-interval} shows that $(1,\rho]$ is optimal in that matrix setting.

Section~\ref{sec:pendant} deals with Conjecture~\ref{conj:two}. Since Theorem~\ref{thm:long-pendant-paths} leaves only pendant paths with one or two edges available, we look for counterexamples among the graphs $C_{b,1}(n_1,\ldots,n_b)$, in which only pendant vertices are attached. For these graphs the characteristic polynomial is governed by a $b\times b$ quadratic matrix polynomial whose size does not grow with the number of attached vertices. This reduction, established in Lemma~\ref{lem:reduction}, is the tool used throughout. It yields at once the counterexample of Section~\ref{sec:counterexample}: the graph $C_{6,1}(0,6,6,12,6,6)$, on $42$ vertices, is integral, and its cycle is a hexagon. We then ask what lies behind this example. In Section~\ref{sec:hexagon} we show that it is the first member of an infinite family $C_{6,1}(0,a,a,2a,a,a)$, which is integral exactly when $a=m(m+1)$ and $2m^2+2m+4$ is a perfect square; in the variable $x=2m+1$ the latter condition is the generalized Pell equation $x^2-2k^2=-7$, which has infinitely many solutions, so that Conjecture~\ref{conj:two} fails for infinitely many graphs.

The shape of that family makes sense on any even cycle, and Section~\ref{sec:general} studies the resulting graphs $\Sun_{2h}(a)$, in which two antipodal cycle vertices receive respectively no pendant vertex and $2a$ of them, while each of the remaining $2h-2$ cycle vertices receives $a$. Theorem~\ref{thm:general-char-poly} computes their characteristic polynomial in closed form for every $h$, as an explicit quadratic factor times the square of a Chebyshev-like polynomial. Theorem~\ref{thm:only-4-and-6} then shows directly from this factorization that $\Sun_{2h}(a)$ can be integral only for $h\in\{2,3\}$. Thus the hexagon of Section~\ref{sec:hexagon} and the square are the only cycle lengths available to this shape.

For the square, Theorem~\ref{thm:general-char-poly} yields the family
$\Sun_4(a)=C_{4,1}(0,a,2a,a)$. Veras \cite{veras} found its $68$-vertex member
$C_{4,1}(0,16,32,16)$ by computer search and stated that the nonzero
eigenvalues of $\Sun_4(a)$ are $\pm\sqrt a$ and $\pm\sqrt{2a+4}$, so that
$\Sun_4(a)$ is integral when $a$ and $2a+4$ are perfect squares. Here this
family is obtained as the case $h=2$ of Theorem~\ref{thm:general-char-poly},
with its full spectrum, and its integral members are described completely
through the classical Pell equation $r^2-2s^2=1$. This complements the isolated $13$-vertex graph with pendant vertices, $C_{4,1}(6,0,3,0)$, already known from \cite[Corollary~4.3]{BRAGA2021281}.

Section~\ref{sec:search} reports an exhaustive computer search over explicitly stated boxes of parameters. Within those boxes, the $42$-vertex graph is the only example whose cycle length is not divisible by four. Finally, Section~\ref{sec:conclusion} presents our concluding remarks and discusses possible directions for future research.

\section{A sharp spectral obstruction for long pendant paths}\label{sec:conj1}

Throughout this section, for a real symmetric matrix $A$ we denote by
\(
n_+(A),\, n_-(A),\, n_0(A)
\)
the numbers of positive, negative, and zero eigenvalues of $A$, respectively, counted with multiplicity. We write $\sigma(A)$ for its spectrum. The spectral theorem gives
\begin{equation}\label{eq:counting-identities}
n_+(A-\lambda I)=\#\{\mu\in\sigma(A):\mu>\lambda\},
\qquad
n_+(A-\lambda I)+n_0(A-\lambda I)=\#\{\mu\in\sigma(A):\mu\geq\lambda\}.
\end{equation}
Set
\(
\rho:=2\cos\frac{\pi}{9}=1.879385\ldots,
\)
the largest root of $x^3-3x-1$. The following result settles Conjecture~\ref{conj:one} and applies to arbitrary simple graphs, rather than only to generalized sun graphs.

\begin{theorem}\label{thm:long-pendant-paths}
Let $G$ be a simple graph having a pendant path of length $t\geq3$. Then $A(G)$ has an eigenvalue in $(1,\rho]$ and an eigenvalue in $[-\rho,-1)$, where $\rho=2\cos(\pi/9)$. In particular, $G$ is not integral. Consequently, Conjecture~\ref{conj:one} holds.
\end{theorem}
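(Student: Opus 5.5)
Since a pendant path of length $t\ge3$ contains one of length exactly $3$ at a suitable vertex, it suffices to consider the last three vertices $u_{t-2},u_{t-1},u_t$. Ordering them last, the adjacency matrix takes the form
\[
A=\begin{pmatrix} B & e\,f^{T}\\ f\,e^{T} & T\end{pmatrix},\qquad
T=\begin{pmatrix}0&1&0\\1&0&1\\0&1&0\end{pmatrix},\quad f=(1,0,0)^{T},
\]
where $B$ is real symmetric (the rest of the graph) and $e$ is the unit vector of the attachment vertex $u_{t-3}$. The plan is to count eigenvalues in $(1,\rho]$ via \eqref{eq:counting-identities}: it suffices to show $\#\{\mu>1\}<\#\{\mu\ge\rho\}$ fails in the right direction, i.e.\ that $n_+(A-I)>n_+(A-\rho I)+n_0(A-\rho I)-\bigl(\text{count of }\mu\ge\rho\bigr)$; concretely I aim to prove $n_+(A-I)\ge n_+(A-\rho I)+n_0(A-\rho I)+1$, which gives an eigenvalue in $(1,\rho)$ unless equality-type degeneracy puts one at $\rho$, matching the closed endpoint.

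For each $\lambda\notin\sigma(T)=\{0,\pm\sqrt2\}$ take the Schur complement with respect to the block $T-\lambda I$. By Haynsworth additivity and Sylvester's law,
\[
n_\pm(A-\lambda I)=n_\pm(T-\lambda I)+n_\pm\bigl(B-\lambda I-g(\lambda)\,ee^{T}\bigr),\qquad g(\lambda)=f^{T}(T-\lambda I)^{-1}f .
\]
A direct computation gives $g(\lambda)=(\lambda^2-1)/(\lambda(2-\lambda^2))\cdot(-1)$ up to sign; the precise form I will verify, the key points being: at $\lambda=1$, $g(1)=0$, so the Schur complement is $B-I$; at $\lambda=\rho$, using $\rho^3=3\rho+1$, one checks $\rho+g(\rho)=1$, i.e.\ the Schur complement is again exactly $B-I$. (Indeed $\rho$ is chosen so that the path $P_3$ ``reflects'' the shift back to $1$; equivalently $\rho$ is a root of the continued fraction $\lambda-1/(\lambda-1/(\lambda-1/\lambda))=1$ type identity, which reduces to $x^3-3x-1=0$.) Hence the $B$-parts coincide at $\lambda=1$ and $\lambda=\rho$, and the difference in inertia comes solely from $T-\lambda I$: $n_+(T-I)=1$ (eigenvalues $\sqrt2-1>0$, $-1,-\sqrt2-1$), while $n_+(T-\rho I)=0$ since $\rho>\sqrt2$. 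So $n_+(A-I)=n_+(A-\rho I)+1$ and $n_0(A-I)=n_0(A-\rho I)$, giving $\#\{\mu>1\}=\#\{\mu>\rho\}+1$, i.e.\ exactly one more eigenvalue in $(1,\rho]$; this is even stronger than needed.

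The negative interval follows by the same computation at $\lambda=-1,-\rho$ (the function $g$ is odd and $T$ is bipartite), or by applying the identical argument to $-A$, whose structure is the same after conjugating by the signature $\diag(1,\dots,1,-1,1,-1)$ on the path coordinates, which maps $T$ to $-T$ and flips $f$'s coupling sign harmlessly (the Schur complement uses $ee^{T}$, sign-invariant). Non-integrality is immediate since $(1,\rho]$ contains no integer, and Conjecture~\ref{conj:one} follows because attaching $P_t$, $t\ge3$, creates such a pendant path. The main obstacle is the exact algebra showing the two Schur complements coincide at $1$ and $\rho$ (getting the sign of $g$ and the identity $\rho-\,$correction$\,=1$ right); everything else is bookkeeping with Sylvester's law.
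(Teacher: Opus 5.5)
The identity you rely on at $\lambda=\rho$ is false. The Schur complement there is not ``again exactly $B-I$''.

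Your scalar computation is right: $g(\lambda)=f^{T}(T-\lambda I)^{-1}f=(\lambda^2-1)/(\lambda(2-\lambda^2))$, and $\rho^3=3\rho+1$ gives $g(\rho)=1-\rho$, i.e. $\rho+g(\rho)=1$. But $g$ multiplies only the rank-one term $ee^{T}$, so this identity fixes only the direction $e$. On $e^{\perp}$ the shift is still $-\rho$, so
\[
S_\rho=B-\rho I+(\rho-1)ee^{T},\qquad S_1-S_\rho=(\rho-1)\bigl(I-ee^{T}\bigr).
\]
This difference is positive semidefinite, and it is nonzero unless $B$ is $1\times1$.

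Consequently your conclusions $n_+(A-I)=n_+(A-\rho I)+1$ and $n_0(A-I)=n_0(A-\rho I)$ fail in general, as does ``exactly one eigenvalue in $(1,\rho]$''. For instance, $P_6$ has a pendant path of length $5$ and two eigenvalues in $(1,\rho]$, namely $2\cos(\pi/7)\approx1.802$ and $2\cos(2\pi/7)\approx1.247$.

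Your stated target $n_+(A-I)\ge n_+(A-\rho I)+n_0(A-\rho I)+1$ is also too strong. It would force an eigenvalue into the open interval $(1,\rho)$. It fails for $B=[1]$, where the extended matrix has eigenvalues $1$ and $\rho$ plus two negative ones.

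The repair is short and keeps your evaluation points $1$ and $\rho$. Since $S_1-S_\rho\succeq0$, every nonzero $x$ in the positive eigenspace of $S_\rho$ satisfies $x^{T}S_1x\ge x^{T}S_\rho x>0$, so $n_+(S_1)\ge n_+(S_\rho)$. Together with $n_+(T-I)=1$ and $n_+(T-\rho I)=0$, this gives
\[
\#\{\mu\in\sigma(A):1<\mu\le\rho\}=n_+(A-I)-n_+(A-\rho I)=1+n_+(S_1)-n_+(S_\rho)\ge1,
\]
which is exactly the claim, with no zero-eigenvalue bookkeeping.

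The paper proceeds differently. It evaluates at $r\in(\rho,2)$, where $S_1-S_r$ is positive definite, and derives $n_+(S_1)\ge n_+(S_r)+n_0(S_r)$. It then pushes $r$ down to $\rho$ by an argument on the smallest eigenvalue above $1$. The corrected version of your approach works directly at $\rho$ and avoids that limiting step.

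The corrected argument uses only that $B$ is real symmetric. So your signature-conjugation reduction of the negative interval to the positive case for $-B$ goes through as you wrote it.
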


\begin{proof}
Let $v_0v_1\cdots v_t$ be a pendant path of length $t\geq3$ in $G$, put $v=v_{t-3}$ and $H=G-\{v_{t-2},v_{t-1},v_t\}$. Then $vv_{t-2}v_{t-1}v_t$ is a pendant path of length $3$ at $v$, that is, $G$ is obtained from $H$ by attaching $P_3$ to $v$. More generally, let $M\in\R^{n\times n}$ be an arbitrary real symmetric matrix, let $e_v$ be a standard coordinate vector, and consider

$$
\widehat M_v=
\begin{pmatrix}
M & e_v & 0 & 0\\
e_v^{\mathsf T} & 0 & 1 & 0\\
0 & 1 & 0 & 1\\
0 & 0 & 1 & 0
\end{pmatrix}.
$$

Note that, after a suitable ordering of the vertices, $A(G)=\widehat M_v$ with $M=A(H)$. We prove the stronger matrix statement
\begin{equation}\label{eq:sharp-matrix-obstruction}
\sigma(\widehat M_v)\cap(1,\rho]\neq\varnothing.
\end{equation}

Let

$$
B=
\begin{pmatrix}
0&1&0\\
1&0&1\\
0&1&0
\end{pmatrix},
\qquad
f=
\begin{pmatrix}
1\\0\\0
\end{pmatrix}.
$$

Then

$$
\widehat M_v=
\begin{pmatrix}
M&e_vf^{\mathsf T}\\
fe_v^{\mathsf T}&B
\end{pmatrix},
$$

\noindent and $\sigma(B)=\{0,\pm\sqrt2\}$. For $\lambda\notin\{0,\pm\sqrt2\}$, the matrix $B-\lambda I_3$ is invertible. Writing $C=e_vf^{\mathsf T}$, block Gaussian elimination on rows and the corresponding columns shows that $\widehat M_v-\lambda I$ is congruent to

$$
S_\lambda\oplus(B-\lambda I_3),
\qquad
S_\lambda=M-\lambda I_n-C(B-\lambda I_3)^{-1}C^{\mathsf T}.
$$

Since $C=e_vf^{\mathsf T}$, this becomes $S_\lambda=M-\lambda I_n-q(\lambda)e_ve_v^{\mathsf T}$, where
\begin{equation}\label{eq:q-explicit}
q(\lambda)
=f^{\mathsf T}(B-\lambda I_3)^{-1}f=
\frac{
\det\begin{pmatrix}
-\lambda&1\\
1&-\lambda
\end{pmatrix}}
{\det(B-\lambda I_3)}=
\frac{\lambda^2-1}{\lambda(2-\lambda^2)}.
\end{equation}
Sylvester's law of inertia therefore gives
\begin{equation}\label{eq:Sylvester}
n_+(\widehat M_v-\lambda I)=
n_+(S_\lambda)+n_+(B-\lambda I_3),
\end{equation}
together with the analogous identity for $n_-$ and $n_0$.

At $\lambda=1$, we have $q(1)=0$ and hence $S_1=M-I_n$. Moreover, $\sigma(B-I_3)=\{\sqrt2-1,-1,-\sqrt2-1\}$, so $B-I_3$ has exactly one positive eigenvalue. Thus
\begin{equation}\label{eq:level-one}
n_+(\widehat M_v-I)=n_+(S_1)+1.
\end{equation}

Now fix $r\in(\rho,2)$. Since $r>\rho>\sqrt2$, the three eigenvalues $\sqrt2-r$, $-r$, and $-\sqrt2-r$ of $B-rI_3$ are negative. Hence
\begin{equation}\label{eq:level-r}
n_+(\widehat M_v-rI)=n_+(S_r),
\qquad
n_0(\widehat M_v-rI)=n_0(S_r).
\end{equation}

We also have $S_1-S_r=(r-1)I_n+q(r)e_ve_v^{\mathsf T}$. Since $e_ve_v^{\mathsf T}$ vanishes on $e_v^\perp$, this matrix has eigenvalue $r-1>0$ on $e_v^\perp$. Its eigenvalue in the direction of $e_v$ is
\begin{equation}\label{eq:sharp-factorization}
(r-1)+q(r)=
\frac{(r-1)(r^3-3r-1)}{r(r^2-2)}>0,
\end{equation}
because $r>\rho$, where $\rho$ is the largest root of $x^3-3x-1$, and $r>\sqrt2$. Therefore $S_1-S_r$ is positive definite.

Let $W=E_+(S_r)\oplus E_0(S_r)$ be the direct sum of the eigenspaces of $S_r$ corresponding to its nonnegative eigenvalues. For every nonzero $x\in W$, we have $x^{\mathsf T}S_rx\geq0$ and $x^{\mathsf T}(S_1-S_r)x>0$, and hence $x^{\mathsf T}S_1x>0$. Thus $S_1$ is positive definite on $W$. Since the largest possible dimension of a subspace on which the quadratic form of $S_1$ is positive definite is $n_+(S_1)$, it follows that
\begin{equation}\label{eq:inertia-comparison}
n_+(S_r)+n_0(S_r)=\dim W\leq n_+(S_1).
\end{equation}

For a symmetric matrix $X$ and an interval $J\subset\R$, let $N_J(X)$ denote the number of eigenvalues of $X$ lying in $J$, counted with multiplicity. Using \eqref{eq:counting-identities}, \eqref{eq:level-one}, \eqref{eq:level-r}, and \eqref{eq:inertia-comparison}, we obtain
\begin{align*}
N_{(1,r)}(\widehat M_v)
&=n_+(\widehat M_v-I)
-n_+(\widehat M_v-rI)
-n_0(\widehat M_v-rI)\\
&=1+n_+(S_1)-n_+(S_r)-n_0(S_r)\geq1.
\end{align*}
This holds for every $r\in(\rho,2)$, so $\widehat M_v$ has an eigenvalue greater than $1$. Let $\mu=\min\bigl(\sigma(\widehat M_v)\cap(1,\infty)\bigr)$. If $\mu>\rho$, then we may choose $r$ such that $\rho<r<\min\{\mu,2\}$. Since $1<r<\mu$ and $\mu$ is the smallest eigenvalue greater than $1$, this would give $N_{(1,r)}(\widehat M_v)=0$, contradicting the preceding inequality. Therefore $\mu\leq\rho$, which proves \eqref{eq:sharp-matrix-obstruction}.

To obtain the negative interval, let $D=\diag(I_n,-1,1,-1)$. Then

$$
D\widehat M_vD=-\widehat{(-M)}_v.
$$

Applying \eqref{eq:sharp-matrix-obstruction} to $-M$ and using the similarity of $\widehat M_v$ and $D\widehat M_vD$ gives $\sigma(\widehat M_v)\cap[-\rho,-1)\neq\varnothing$.

Returning to $M=A(H)$ proves both spectral assertions for $G$. Since neither
interval contains an integer, $G$ is not integral. Finally, attaching $P_t$ to
a cycle vertex creates a pendant path of length $t$, so no generalized sun
graph with an attached copy of $P_t$, $t\geq3$, is integral. Hence, this proves Conjecture~\ref{conj:one}. 
\end{proof}

Theorem~\ref{thm:long-pendant-paths} contains, as a special case, part of the classification of integral starlike trees due to Watanabe and Schwenk \cite{WatanabeSchwenk1979}: an arm of length at least three of a starlike tree is a pendant path of length at least three at its central vertex, so every arm of an integral starlike tree has length at most two. For trees with two vertices of degree larger than two, Watanabe and Schwenk treated the case in which these vertices are adjacent, and Brouwer, Del-Vecchio, Jacobs, Trevisan and Vinagre \cite{BrouwerDJTV2011} showed that no such tree is integral when they are nonadjacent. These results concern trees of a prescribed shape, whereas Theorem~\ref{thm:long-pendant-paths} imposes no condition on the graph outside the pendant path.

The constant $\rho$ and the choice of a closed right endpoint are forced in the general matrix statement used in the proof.

\begin{proposition}\label{prop:sharp-interval}
The interval $(1,\rho]$ in \eqref{eq:sharp-matrix-obstruction} is optimal among intervals contained in $(1,\rho]$, that is, no proper interval of the form $(a,b]\subsetneq(1,\rho]$ contains an eigenvalue of $\widehat M_v$ for every real symmetric matrix $M$ and every coordinate $v$.
\end{proposition}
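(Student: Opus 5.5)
The plan is to give two explicit one-dimensional examples, i.e.\ to take $n=1$, $M=(m)$ and $v=1$. If $(a,b]\subsetneq(1,\rho]$ is a proper subinterval, then either $b<\rho$ or $a>1$. In the first case I will produce an $M$ for which $\rho$ is the \emph{only} eigenvalue of $\widehat M_v$ in $(1,\rho]$. In the second case I will produce an $M$ whose only eigenvalue in $(1,\rho]$ lies in $(1,a]$. In either case $(a,b]$ then contains no eigenvalue.

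The tool is the Schur complement computation from the proof of Theorem~\ref{thm:long-pendant-paths}. For $\lambda\notin\{0,\pm\sqrt2\}$, the matrix $\widehat M_v-\lambda I$ is congruent to $S_\lambda\oplus(B-\lambda I_3)$, where here $S_\lambda=m-\lambda-q(\lambda)$ is a scalar. Hence such a $\lambda$ is an eigenvalue of $\widehat M_v$ exactly when $m=g(\lambda)$, where $g(\lambda):=\lambda+q(\lambda)$. By \eqref{eq:sharp-factorization},
\[
g(\lambda)-1=\frac{(\lambda-1)(\lambda^3-3\lambda-1)}{\lambda(\lambda^2-2)} .
\]
Equivalently, $\det(\widehat M_v-\lambda I)$ is, up to sign, $(m-\lambda)\lambda(2-\lambda^2)-(\lambda^2-1)$. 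This quartic can be written down directly, which also disposes of the excluded points $0,\pm\sqrt2$: at $\lambda=\pm\sqrt2$ the quartic equals $-1\neq0$.

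\emph{Case $b<\rho$.} Take $m=1$. The quartic becomes $(\lambda-1)(\lambda^3-3\lambda-1)$ up to sign. Its roots are $1$ together with the three roots $2\cos(\pi/9)$, $2\cos(5\pi/9)$ and $2\cos(7\pi/9)$ of $x^3-3x-1$. Only $\rho$ lies in $(1,\rho]$, so $(a,b]$ with $b<\rho$ contains no eigenvalue. This also shows that the right endpoint must be closed and equal to $\rho$.

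\emph{Case $a>1$.} Take $m>1$ close to $1$.
\begin{itemize}[leftmargin=1.5em]
\item On $(1,\sqrt2)$, $q$ is positive and increasing and tends to $+\infty$ as $\lambda\to\sqrt2^-$. Hence $g$ increases continuously from $g(1)=1$ to $+\infty$, and $m=g(\lambda)$ has a unique root $\lambda_m\in(1,\sqrt2)$. Moreover $\lambda_m\to1$ as $m\to1^+$.
\item On $(\sqrt2,\rho]$, both $\lambda-1$ and $\lambda^2-2$ are positive, while $\lambda^3-3\lambda-1\le0$. The displayed formula therefore gives $g(\lambda)\le1<m$, so there is no root there.
\item The point $\sqrt2$ itself is not an eigenvalue, by the determinant check above.
\end{itemize}
Consequently the only eigenvalue of $\widehat M_v$ in $(1,\rho]$ is $\lambda_m$. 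Choosing $m$ with $\lambda_m\le a$ gives an example with no eigenvalue in $(a,b]$.

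The only delicate step is the second case. There one must make sure that the root near $1$ is the \emph{unique} eigenvalue in $(1,\rho]$; simply perturbing $m=1$ would leave an eigenvalue near $\rho$ and not help. The sign analysis of $g-1$ on $(\sqrt2,\rho]$ via the factorization \eqref{eq:sharp-factorization} is what settles this. The rest is a routine check of the monotonicity of $q$ on $(1,\sqrt2)$.
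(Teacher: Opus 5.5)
Your proof is correct. The case $b<\rho$ is the same as the paper's: the example $M=[1]$ with characteristic polynomial $(\lambda-1)(\lambda^3-3\lambda-1)$.

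For the case $a>1$ the paper also uses $M=[1+\varepsilon]$, but it argues by analytic perturbation theory:
\begin{itemize}
\item the simple eigenvalue $1$ moves to $1+\varepsilon/3+O(\varepsilon^2)$, by the first-order formula with eigenvector $(1,0,-1,-1)^{\mathsf T}$;
\item the simple eigenvalue $\rho$ has positive derivative, because its eigenvectors have nonzero first coordinate;
\item the two negative eigenvalues stay negative by continuity.
\end{itemize}
All of this holds only for $\varepsilon$ sufficiently small.

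You instead reuse the Schur complement from Theorem~\ref{thm:long-pendant-paths} to turn the eigenvalue problem into the equation $m=g(\lambda)$. You then locate its solutions in $(1,\rho]$ exactly, using the monotonicity of $q$ on $(1,\sqrt2)$, the sign of the factorization \eqref{eq:sharp-factorization} on $(\sqrt2,\rho]$, and the direct determinant check at $\sqrt2$. Your route is more elementary and global: it works for every $m>1$, not just small perturbations, and it shows why the eigenvalue near $\rho$ leaves the interval, namely $\lambda^3-3\lambda-1\le0$ there. The paper's route, in exchange, gives the explicit first-order rate $1/3$ and avoids any global sign analysis.

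One correction to your closing remark: perturbing $m=1$ upward does not leave an eigenvalue near $\rho$ inside $(1,\rho]$. That eigenvalue moves strictly above $\rho$, which is exactly what your own sign analysis proves and what the paper's perturbation argument exploits. So plain perturbation does work, provided one tracks the direction in which that branch moves.
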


\begin{proof}
First take $M=[1]$. Then
\[
\det(\lambda I-\widehat M)
=(\lambda-1)(\lambda^3-3\lambda-1),
\]
and hence
\[
\sigma(\widehat M)
=\left\{2\cos\frac{7\pi}{9},\ 2\cos\frac{5\pi}{9},\ 1,\ \rho\right\}.
\]
Thus $\rho$ is the only eigenvalue in $(1,\rho]$, so the upper endpoint cannot be decreased or omitted.

It remains to show that the lower endpoint cannot be increased. Replace $M=[1]$ by $M_\varepsilon=[1+\varepsilon]$, and write $\widehat M_\varepsilon$ for the corresponding extended matrix. Since the eigenvalue $1$ of $\widehat M_0$ is simple, it determines an analytic eigenvalue branch $\lambda_1(\varepsilon)$ satisfying $\lambda_1(0)=1$. An associated eigenvector at $\varepsilon=0$ is $u=(1,0,-1,-1)^{\mathsf T}$, while $\widehat M_\varepsilon'=\diag(1,0,0,0)$. The standard first-order perturbation formula therefore gives

$$
\lambda_1'(0)
=\frac{u^{\mathsf T}\widehat M_\varepsilon' u}{u^{\mathsf T}u}
=\frac13,
\qquad
\lambda_1(\varepsilon)
=1+\frac{\varepsilon}{3}+O(\varepsilon^2).
$$

The simple eigenvalue $\rho$ likewise determines an analytic branch $\lambda_\rho(\varepsilon)$ with $\lambda_\rho(0)=\rho$. Its derivative at $0$ is positive, because the same perturbation formula applies and every eigenvector associated with $\rho$ has a nonzero first coordinate. Hence $\lambda_\rho(\varepsilon)>\rho$ for all sufficiently small $\varepsilon>0$, while the other two eigenvalues remain negative by continuity. Consequently, for every $a>1$, choosing $\varepsilon>0$ sufficiently small gives $1<\lambda_1(\varepsilon)<a$ and leaves no eigenvalue in $(a,\rho]$. Thus no interval $(a,\rho]$ with $a>1$ has the universal property asserted in the proposition.  
\end{proof}

We note that the proof of Theorem~\ref{thm:long-pendant-paths} used no combinatorial property of $M$. It therefore also applies to weighted and signed graphs, and to generalized adjacency matrices with an arbitrary diagonal on the original vertices, provided that the three new vertices have zero diagonal and form a unit-weight terminal path joined to the original matrix by one unit entry. The bound is sharp for this matrix class; Proposition~\ref{prop:sharp-interval} does not assert sharpness within the narrower class of adjacency matrices of simple graphs.

\section{Integral generalized sun graphs with pendant vertices}\label{sec:pendant}

We now turn to Conjecture~\ref{conj:two}. By Theorem~\ref{thm:long-pendant-paths}, an integral generalized sun graph can only carry pendant paths of length one or two, that is, copies of $P_1$ and $P_2$. The evidence for the conjecture came primarily from the second possibility: it was proved in \cite[Theorem 3.1]{BRAGA2021281} that if copies of $P_2$ are attached to exactly two cycle vertices and the graph is integral, then the cycle has length four and the two vertices are adjacent, and the three infinite families constructed there are all of the form $C_{4,2}(p,q,0,0)$. The sporadic examples found by computer search in \cite{BragaTrevisan2017} also have
a square as their cycle, and so do the integral graphs with pendant vertices only that were previously known, namely $C_{4,1}(6,0,3,0)$ \cite[Corollary~4.3]{BRAGA2021281} and the graphs $C_{4,1}(0,a,2a,a)$ reported by Veras \cite{veras}. This accumulation of evidence at $b=4$ made Conjecture~\ref{conj:two} natural.

Graphs carrying only pendant vertices have received comparatively little
attention, and the examples just mentioned do not contradict
Conjecture~\ref{conj:two}, since their cycle is a square. This motivates us to
look for counterexamples among the graphs $C_{b,1}(n_1,\dots,n_b)$ on longer
cycles.

We begin with the elementary reduction that will be used throughout: for graphs of the form $C_{b,1}(n_1,\dots,n_b)$ the characteristic polynomial is governed by a $b\times b$ matrix polynomial whose size does not grow with the number of attached pendant vertices.

\begin{lemma}\label{lem:reduction}
Let $G=C_{b,1}(n_1,\dots,n_b)$, let $n=b+\sum_{k=1}^{b}n_k$ be its order, let $C=A(C_b)$ and let $D=\diag(n_1,\dots,n_b)$. Then
\begin{equation}\label{eq:reduction}
p_G(\lambda)=\det\bigl(\lambda I_n-A(G)\bigr)
=\lambda^{\,n-2b}\,\det\bigl(\lambda^2I_b-\lambda C-D\bigr).
\end{equation}
\end{lemma}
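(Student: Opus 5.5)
We order the vertices of $G$ with the $b$ cycle vertices first, followed by the $N=\sum_k n_k$ pendant vertices grouped by the cycle vertex they hang from. Then
\[
A(G)=\begin{pmatrix} C & E\\ E^{\mathsf T} & 0_N\end{pmatrix},
\]
where $E\in\R^{b\times N}$ is the incidence matrix between cycle vertices and pendant vertices. Row $k$ of $E$ has ones exactly in the $n_k$ columns of the pendant vertices at $v_k$. The key observation is that these supports are disjoint, so $EE^{\mathsf T}=\diag(n_1,\dots,n_b)=D$.

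For $\lambda\neq0$ the lower-right block $\lambda I_N$ of $\lambda I_n-A(G)$ is invertible. The Schur complement formula, the same block elimination used in the proof of Theorem~\ref{thm:long-pendant-paths}, then gives
\[
p_G(\lambda)=\det(\lambda I_N)\,\det\Bigl(\lambda I_b-C-\tfrac1\lambda EE^{\mathsf T}\Bigr)
=\lambda^{N}\det\Bigl(\lambda I_b-C-\tfrac1\lambda D\Bigr).
\]
Pulling $\lambda^{-1}$ out of each of the $b$ rows turns the last determinant into $\lambda^{-b}\det(\lambda^2I_b-\lambda C-D)$. The total power of $\lambda$ is therefore $N-b=n-2b$, which is \eqref{eq:reduction} for all $\lambda\neq0$.

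It remains to extend the identity to $\lambda=0$. When $n\ge 2b$, both sides of \eqref{eq:reduction} are polynomials in $\lambda$ that agree on $\R\setminus\{0\}$, so they agree identically. When $n<2b$, the exponent $n-2b$ is negative and the right-hand side is a priori only a rational function. The identity on $\lambda\neq0$ still holds, and it shows that $\lambda^{2b-n}$ divides $\det(\lambda^2I_b-\lambda C-D)$, so the right-hand side is in fact a polynomial. This happens because every $v_k$ with $n_k=0$ contributes a row of $\lambda^2I_b-\lambda C-D$ divisible by $\lambda$. The two sides are then equal as rational functions, which is the meaning of the identity in this case.

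The only step needing care is this bookkeeping at $\lambda=0$ and the possibly negative exponent. Everything else is a routine Schur complement computation, made clean by the diagonal structure of $EE^{\mathsf T}$.
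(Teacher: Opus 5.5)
Your proof is correct and follows the paper's argument essentially step for step. The shared steps are the ordering with cycle vertices first, the identity $EE^{\mathsf T}=D$ from disjoint row supports, and the Schur complement of $\lambda I_N$ for $\lambda\neq0$. The extension to $\lambda=0$ also matches the paper, including the observation that the negative power is removable when $n<2b$. Your extra remark explaining that divisibility via rows with $n_k=0$ is correct once you add the count: at least $b-N=2b-n$ of the $n_k$ vanish, since $N=\sum_k n_k$ is at least the number of nonzero $n_k$.
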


\begin{proof}
Order the $b$ cycle vertices first and the $n-b$ pendant vertices last. Writing $R\in\R^{b\times(n-b)}$ for the cycle-to-pendant incidence matrix, we have
\[
A(G)=
\begin{pmatrix}
C & R\\
R^{\mathsf T} & 0
\end{pmatrix}.
\]
Since each pendant vertex is adjacent to exactly one cycle vertex, the rows of $R$ have pairwise disjoint supports, and the $k$-th row has exactly $n_k$ nonzero entries; hence $RR^{\mathsf T}=D$. For $\lambda\neq0$ the block $\lambda I_{n-b}$ is invertible, and taking its Schur complement gives
\[
p_G(\lambda)
=\lambda^{\,n-b}\det\bigl(\lambda I_b-C-\lambda^{-1}RR^{\mathsf T}\bigr)
=\lambda^{\,n-b}\det\bigl(\lambda I_b-C-\lambda^{-1}D\bigr)
=\lambda^{\,n-2b}\det\bigl(\lambda^2I_b-\lambda C-D\bigr).
\]
The left-hand side is a polynomial. The identity for $\lambda\neq0$ shows that the apparent negative power on the right, which can occur when $n<2b$, is removable: the determinant contains the required power of $\lambda$. After this cancellation, the right-hand side is a polynomial, and the identity holds for every $\lambda$.  
\end{proof}

\subsection{A counterexample to Conjecture~\ref{conj:two}}\label{sec:counterexample}

Lemma~\ref{lem:reduction} reduces the integrality of a graph on $42$ vertices to the factorization of a $6\times6$ determinant, and this is enough to disprove Conjecture~\ref{conj:two} at once.

\begin{proposition}\label{prop:counterexample}
The generalized sun graph
\[
G=C_{6,1}(0,6,6,12,6,6)
\]
on $42$ vertices is integral, with
\[
\sigma(G)=\bigl\{0^{[32]},\ \pm2^{[2]},\ \pm3^{[2]},\ \pm4\bigr\}.
\]
In particular, $G$ is an integral generalized sun graph which is not a cycle, whereas its unique cycle has order $6$. Consequently, Conjecture~\ref{conj:two} is false.
\end{proposition}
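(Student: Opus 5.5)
We apply Lemma~\ref{lem:reduction} with $b=6$, $n=42$, $C=A(C_6)$ and $D=\diag(0,6,6,12,6,6)$. This gives $p_G(\lambda)=\lambda^{30}\det Q(\lambda)$ with $Q(\lambda)=\lambda^2I_6-\lambda C-D$. The claimed spectrum is then equivalent to the polynomial identity
\[
\det Q(\lambda)=\lambda^{2}(\lambda^2-4)^2(\lambda^2-9)^2(\lambda^2-16).
\]
The degrees match, since both sides have degree $12$, and the factor $\lambda^2$ together with $\lambda^{30}$ accounts for the $32$ zero eigenvalues. Once this identity is established, integrality and the remaining assertions are immediate: the graph has pendant vertices, so it is not a cycle, and its cycle has order $6$, which is not a multiple of $4$.

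To factor $\det Q(\lambda)$ we exploit the reflection of the hexagon that fixes $v_1$ and $v_4$ and swaps $v_2\leftrightarrow v_6$ and $v_3\leftrightarrow v_5$. This reflection preserves both $C$ and $D$, so $Q(\lambda)$ commutes with the corresponding permutation matrix. We decompose $\R^6$ into the antisymmetric part, spanned by $e_2-e_6$ and $e_3-e_5$, and the symmetric part, with orthonormal basis $e_1$, $(e_2+e_6)/\sqrt2$, $(e_3+e_5)/\sqrt2$, $e_4$. The determinant then factors as the product of a $2\times2$ block and a $4\times4$ block.

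On the antisymmetric part, $C$ swaps $e_2-e_6$ and $e_3-e_5$, because the contributions through $v_1$ and $v_4$ cancel. Moreover $D$ acts there as $6I$. The block is therefore
\[
\begin{pmatrix}\lambda^2-6&-\lambda\\-\lambda&\lambda^2-6\end{pmatrix},
\]
whose determinant is $(\lambda^2-\lambda-6)(\lambda^2+\lambda-6)=(\lambda^2-4)(\lambda^2-9)$.

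On the symmetric part, in the basis listed above, the block is the tridiagonal matrix with diagonal $\lambda^2,\ \lambda^2-6,\ \lambda^2-6,\ \lambda^2-12$ and off-diagonal entries $-\sqrt2\lambda,\ -\lambda,\ -\sqrt2\lambda$. We evaluate its determinant by the three-term continuant recursion, expanding from the $e_1$ end, and aim to show that it equals $\lambda^2(\lambda^2-4)(\lambda^2-9)(\lambda^2-16)$. Multiplying the two blocks then gives the required identity.

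The only real work, and the step most prone to arithmetic slips, is this $4\times4$ continuant. As a check we will use the predicted roots: the expansion should vanish identically at $\lambda=0$, since the first row is then zero. It should also vanish at $\lambda=\pm2,\pm3,\pm4$, and comparing leading coefficients then pins down the polynomial completely.
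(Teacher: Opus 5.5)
Your proposal is correct and follows essentially the paper's route: the paper applies Lemma~\ref{lem:reduction} and then, in the proof of Theorem~\ref{thm:char-poly-hexagon} (specialized to $a=6$), splits $Q$ by the same reflection into the $2\times2$ antisymmetric block and the $4\times4$ tridiagonal symmetric block. The continuant you leave to compute does work out, with $z=\lambda^2$: $\Delta_2=z(z-8)$, $\Delta_3=z(z^2-15z+48)$ and $\Delta_4=z(z-4)(z-9)(z-16)$; your root check is also valid once you note that this block determinant is a monic quartic in $z$ (it is even in $\lambda$), so it is determined by the four distinct roots $z=0,4,9,16$.
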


\begin{proof}
Here $b=6$, $n=42$ and $D=\diag(0,6,6,12,6,6)$, so that Lemma~\ref{lem:reduction} gives $p_G(\lambda)=\lambda^{30}\det Q$ with
\[
Q=\lambda^2I_6-\lambda C-D,
\qquad
C=A(C_6).
\]
Writing $z=\lambda^2$ and expanding, one finds
\[
\det Q=z\,\bigl[(z-6)^2-z\bigr]^2\,(z-16)
=\lambda^2(\lambda^2-4)^2(\lambda^2-9)^2(\lambda^2-16),
\]
where we used $(z-6)^2-z=z^2-13z+36=(z-4)(z-9)$. Hence
\[
p_G(\lambda)=\lambda^{32}(\lambda^2-4)^2(\lambda^2-9)^2(\lambda^2-16),
\]
and all the eigenvalues of $G$ are integers, with the multiplicities stated. The graph has a unique cycle, of order $6$, and it is not a cycle itself.  
\end{proof}

A detailed computation of $\det Q$, for a general parameter in place of $6$, is carried out in the proof of Theorem~\ref{thm:char-poly-hexagon} below.

It is instructive to see why such a counterexample could not be detected by the methods of \cite{BRAGA2021281}. The evidence supporting Conjecture~\ref{conj:two} there comes from generalized sun graphs whose pendant paths are copies of $P_2$. For those graphs, running the eigenvalue location algorithm of \cite{BragaTrevisan2017} with $\alpha=0$ detaches every copy of $P_2$ from the cycle, and shows that $0$ is an eigenvalue of the graph only when it is an eigenvalue of the bare cycle $C_b$, which forces $b\equiv0\pmod4$; this is the first part of the proof of \cite[Theorem~3.1]{BRAGA2021281}. When pendant vertices are attached instead, the mechanism is entirely different: two pendant vertices at a common cycle vertex already produce the eigenvalue $0$, since their difference is an eigenvector, the cycle vertex is then annihilated by the algorithm, and no constraint on $b$ survives. This is precisely what happens in the graph above, which has $32$ zero eigenvalues although $C_6$ has none. We also note that $42$ vertices is well beyond the reach of the exhaustive search up to $21$ vertices reported in \cite{BRAGA2021281}.

\subsection{An infinite family with a hexagonal cycle}\label{sec:hexagon}

The graph of Proposition~\ref{prop:counterexample} is not isolated. Replacing the numbers $6$ and $12$ by $a$ and $2a$ we obtain a one-parameter family
\[
\Sun_6(a):=C_{6,1}(0,a,a,2a,a,a),
\qquad a\geq1,
\]
of bipartite unicyclic graphs on $6a+6$ vertices, whose characteristic polynomial can be computed in closed form. The computation uses the reflection symmetry of the hexagon, and it is the model for the more general one carried out in Section~\ref{sec:general}.

\begin{theorem}\label{thm:char-poly-hexagon}
For every integer $a\geq1$,
\begin{equation}\label{eq:char-poly-hexagon}
p_{\Sun_6(a)}(\lambda)
=\lambda^{6a-4}\,(\lambda^2-\lambda-a)^2\,(\lambda^2+\lambda-a)^2\,(\lambda^2-2a-4).
\end{equation}
\end{theorem}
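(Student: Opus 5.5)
The plan is to apply Lemma~\ref{lem:reduction} with $b=6$, $n=6a+6$ and $D=\diag(0,a,a,2a,a,a)$, which gives
\[
p_{\Sun_6(a)}(\lambda)=\lambda^{6a-6}\det Q,\qquad Q=\lambda^2I_6-\lambda C-D,\quad C=A(C_6).
\]
So \eqref{eq:char-poly-hexagon} is equivalent to the identity
\[
\det Q=\lambda^2(\lambda^2-\lambda-a)^2(\lambda^2+\lambda-a)^2(\lambda^2-2a-4).
\]
To compute $\det Q$ I will use the reflection of the hexagon fixing $v_1$ and $v_4$ and swapping $v_2\leftrightarrow v_6$ and $v_3\leftrightarrow v_5$. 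Since $D$ is invariant under this reflection, the permutation matrix $P$ of the reflection commutes with $Q$. Hence $Q$ preserves the antisymmetric subspace $\{x: x_1=x_4=0,\ x_6=-x_2,\ x_5=-x_3\}$ and the symmetric subspace $\{x: x_6=x_2,\ x_5=x_3\}$. Writing $Q$ in a basis adapted to this decomposition (an invertible change of basis, so the determinant is unchanged), $\det Q$ factors as the product of the determinants of the two restricted blocks. Throughout, set $z=\lambda^2$.

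The antisymmetric block is $2\times2$. In the coordinates $(x_2,x_3)$, row $v_2$ sees only $v_3$, because $x_1=0$; row $v_3$ sees $v_2$, and $x_4=0$. The block is therefore
\[
\begin{pmatrix} z-a&-\lambda\\ -\lambda&z-a\end{pmatrix},
\]
whose determinant is $(z-a)^2-\lambda^2=(\lambda^2-\lambda-a)(\lambda^2+\lambda-a)$.

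The symmetric block is $4\times4$ in the coordinates $(x_1,x_2,x_3,x_4)$. It is obtained from the rows of $Q$ at $v_1,\dots,v_4$, where $v_1$ and $v_4$ each see their two neighbours as a doubled coordinate:
\[
\begin{pmatrix}
z&-2\lambda&0&0\\
-\lambda&z-a&-\lambda&0\\
0&-\lambda&z-a&-\lambda\\
0&0&-2\lambda&z-2a
\end{pmatrix}.
\]
It then remains to show that the determinant of this block equals $\lambda^2(\lambda^2-\lambda-a)(\lambda^2+\lambda-a)(\lambda^2-2a-4)$. Multiplying the two block determinants gives exactly the squared factors in \eqref{eq:char-poly-hexagon}, and combining with $\lambda^{6a-6}$ yields the exponent $6a-4$.

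The main obstacle is this $4\times4$ factorization. I would first expand along the first row, and then along the last row, reducing everything to $2\times2$ tridiagonal minors. This gives a polynomial in $z$ alone, which is expected since the graph is bipartite and only even powers of $\lambda$ can survive. Next I would check that it equals $z\,\bigl((z-a)^2-z\bigr)(z-2a-4)$ by comparing coefficients of this quartic in $z$. As sanity checks I would use $z=0$, where the first row forces a zero determinant, and the case $a=6$, which must agree with Proposition~\ref{prop:counterexample}.

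If the coefficient comparison becomes messy, a cleaner route to the factor $(z-a)^2-z$ is to exhibit explicit null vectors of the symmetric block whenever $\lambda^2\pm\lambda-a=0$; the remaining factor is then obtained by dividing out the known factors.
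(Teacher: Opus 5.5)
Your proposal is correct and follows the paper's proof essentially step by step. It uses the same reduction via Lemma~\ref{lem:reduction} and the same reflection splitting into the $2\times2$ antisymmetric block and a $4\times4$ symmetric block. Your symmetric block, written in the non-orthonormal coordinates $(x_1,\dots,x_4)$, is diagonally similar to the paper's symmetric $K_+$ with entries $-\sqrt2\lambda$, so the two determinants agree. The paper settles the remaining $4\times4$ identity with the continuant recursion for leading principal minors: with $w=(z-a)^2-z$ one gets $\Delta_4/z=(z-2a)\bigl(w-2(z-a)\bigr)-2z(z-a-2)=(z-2a-4)\,w$. Your planned expansion will therefore close the argument directly, without needing the null-vector fallback.
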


\begin{proof}
By Lemma~\ref{lem:reduction} with $b=6$ and $n=6a+6$ we have $p_{\Sun_6(a)}(\lambda)=\lambda^{6a-6}\det Q$, where $Q=\lambda^2I_6-\lambda C-D$, $C=A(C_6)$ and $D=\diag(0,a,a,2a,a,a)$. Put $z=\lambda^2$.

The reflection of the hexagon fixing $v_1$ and $v_4$ and exchanging $v_2\leftrightarrow v_6$ and $v_3\leftrightarrow v_5$ preserves both $C$ and $D$. Hence $Q$ is block diagonalized by the orthonormal basis of $\R^6$ consisting of the antisymmetric vectors
\[
\frac{e_2-e_6}{\sqrt2},\qquad\frac{e_3-e_5}{\sqrt2},
\]
together with the symmetric vectors
\[
e_1,\qquad\frac{e_2+e_6}{\sqrt2},\qquad\frac{e_3+e_5}{\sqrt2},\qquad e_4 .
\]
On the antisymmetric subspace, $Q$ is represented by
\[
K_-=\begin{pmatrix}z-a&-\lambda\\-\lambda&z-a\end{pmatrix},
\qquad
\det K_-=(z-a)^2-z ,
\]
while on the symmetric subspace, using
\[
\Bigl\langle e_1,\,C\tfrac{e_2+e_6}{\sqrt2}\Bigr\rangle=\sqrt2,
\qquad
\Bigl\langle \tfrac{e_2+e_6}{\sqrt2},\,C\tfrac{e_3+e_5}{\sqrt2}\Bigr\rangle=1,
\qquad
\Bigl\langle \tfrac{e_3+e_5}{\sqrt2},\,Ce_4\Bigr\rangle=\sqrt2,
\]
it is represented by the tridiagonal matrix
\[
K_+=
\begin{pmatrix}
z & -\sqrt2\lambda & 0 & 0\\
-\sqrt2\lambda & z-a & -\lambda & 0\\
0 & -\lambda & z-a & -\sqrt2\lambda\\
0 & 0 & -\sqrt2\lambda & z-2a
\end{pmatrix}.
\]
Expanding along the last row, the leading principal minors $\Delta_j$ of $K_+$ satisfy
\begin{align*}
\Delta_1&=z,\\
\Delta_2&=(z-a)\Delta_1-2z=z(z-a-2),\\
\Delta_3&=(z-a)\Delta_2-z\Delta_1=z\bigl[(z-a)(z-a-2)-z\bigr],\\
\Delta_4&=(z-2a)\Delta_3-2z\Delta_2 .
\end{align*}
Set $w:=(z-a)^2-z=\det K_-$. Since $(z-a)(z-a-2)-z=w-2(z-a)$, the last line gives
\begin{align*}
\Delta_4/z
&=(z-2a)\bigl(w-2(z-a)\bigr)-2z(z-a-2)\\
&=(z-2a)w-2(z-a)(z-2a)-2z(z-a)+4z\\
&=(z-2a)w-2(z-a)(2z-2a)+4z\\
&=(z-2a)w-4\bigl[(z-a)^2-z\bigr]
=(z-2a-4)\,w .
\end{align*}
Hence $\det K_+=z\,w\,(z-2a-4)$, and therefore
\[
\det Q=\det K_-\cdot\det K_+=z\,w^2\,(z-2a-4)
=\lambda^2\bigl[(\lambda^2-a)^2-\lambda^2\bigr]^2(\lambda^2-2a-4).
\]
Finally $(\lambda^2-a)^2-\lambda^2=(\lambda^2-\lambda-a)(\lambda^2+\lambda-a)$, and multiplying by $\lambda^{6a-6}$ gives \eqref{eq:char-poly-hexagon}.  
\end{proof}

Reading integrality off \eqref{eq:char-poly-hexagon} is now a matter of deciding when three quadratic polynomials have integer roots.

\begin{theorem}\label{thm:integrality-criterion}
Let $a\geq1$ be an integer. The generalized sun graph $\Sun_6(a)=C_{6,1}(0,a,a,2a,a,a)$ is integral if and only if there exist integers $m\geq1$ and $k\geq1$ such that
\begin{equation}\label{eq:integrality-conditions}
a=m(m+1)
\qquad\text{and}\qquad
2m^2+2m+4=k^2 .
\end{equation}
In this case $\Sun_6(a)$ has $6m(m+1)+6$ vertices and
\[
\sigma\bigl(\Sun_6(a)\bigr)=\Bigl\{0^{[6a-4]},\ \pm m^{[2]},\ \pm(m+1)^{[2]},\ \pm k\Bigr\}.
\]
\end{theorem}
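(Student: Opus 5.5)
The plan is to read integrality directly off the factorization \eqref{eq:char-poly-hexagon} of Theorem~\ref{thm:char-poly-hexagon}. Since $\lambda^{6a-4}$ contributes only the eigenvalue $0$, $\Sun_6(a)$ is integral if and only if each of the three quadratics $\lambda^2-\lambda-a$, $\lambda^2+\lambda-a$ and $\lambda^2-2a-4$ has only integer roots. The second quadratic is obtained from the first by $\lambda\mapsto-\lambda$, so it suffices to handle the first and the third.

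For $\lambda^2-\lambda-a$, a monic integer quadratic has integer roots if and only if its discriminant $1+4a$ is a perfect square. Since $1+4a$ is odd, this square must be $(2m+1)^2$ with $m\geq0$, which gives $a=m(m+1)$. Because $a\geq1$ we get $m\geq1$. Equivalently, we may argue directly: the roots $\alpha,\beta$ are integers with $\alpha+\beta=1$ and $\alpha\beta=-a$, so $\{\alpha,\beta\}=\{m+1,-m\}$. The other quadratic then has roots $\{m,-(m+1)\}$. For $\lambda^2-(2a+4)$, the roots are $\pm\sqrt{2a+4}$, and these are integers exactly when $2a+4=k^2$ for some integer $k$, which we may take positive since $2a+4>0$. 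Substituting $a=m(m+1)$ gives $2m^2+2m+4=k^2$. This proves both directions at once, because each step is an equivalence.

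For the spectrum, collect the roots with their multiplicities from \eqref{eq:char-poly-hexagon}. The eigenvalue $0$ has multiplicity $6a-4$ and occurs nowhere else, since $a\neq0$ and $2a+4\neq0$. The eigenvalues $\pm m$ and $\pm(m+1)$ each have multiplicity $2$, coming from the squared factors. The eigenvalues $\pm k$ are simple. The vertex count $6a+6=6m(m+1)+6$ is immediate.

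The only point needing care is that no coincidences among the listed eigenvalues change the multiplicities. We have $m\neq m+1$ trivially. To rule out $k\in\{m,m+1\}$, note that $k^2=2m^2+2m+4>(m+1)^2$ for $m\geq1$, so $k>m+1$. With this check, the argument is essentially routine and presents no real obstacle.
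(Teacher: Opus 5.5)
Your proposal is correct and follows essentially the same route as the paper. Both read the integer roots off the three quadratic factors of the characteristic polynomial of Theorem~\ref{thm:char-poly-hexagon}, via $1+4a=(2m+1)^2$, the symmetry $\lambda\mapsto-\lambda$, and $2a+4=k^2$. Your extra check that $k^2-(m+1)^2=m^2+3>0$, which guarantees that no listed eigenvalues coincide, is a small detail the paper leaves implicit.
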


\begin{proof}
By Theorem~\ref{thm:char-poly-hexagon}, the nonzero eigenvalues of $\Sun_6(a)$ are the roots of $\lambda^2-\lambda-a$ and of $\lambda^2+\lambda-a$, each with multiplicity two, together with the two roots of $\lambda^2-2a-4$.

The roots of $\lambda^2-\lambda-a$ are $\tfrac12\bigl(1\pm\sqrt{1+4a}\bigr)$. They are integers if and only if $1+4a$ is a perfect square, necessarily odd, say $1+4a=(2m+1)^2$ with $m\geq1$; this is equivalent to $a=m(m+1)$, and in that case the roots are $m+1$ and $-m$. Replacing $\lambda$ by $-\lambda$, the roots of $\lambda^2+\lambda-a$ are then $m$ and $-(m+1)$.

The remaining roots are $\pm\sqrt{2a+4}$, which are integers if and only if $2a+4=2m^2+2m+4$ is a perfect square, say $k^2$ with $k\geq1$. This establishes \eqref{eq:integrality-conditions}. The multiplicity $6a-4$ of the eigenvalue $0$ and the multiplicities of the remaining eigenvalues are read off from \eqref{eq:char-poly-hexagon}, and the order is $6+6a=6m(m+1)+6$.  
\end{proof}

The second condition in \eqref{eq:integrality-conditions} is a Pell equation in disguise. Indeed, substituting $x=2m+1$ in $2m^2+2m+4=k^2$ gives $x^2+7=2k^2$, that is,
\begin{equation}\label{eq:pell}
x^2-2k^2=-7 .
\end{equation}
The solutions of \eqref{eq:pell} are permuted by multiplication by the fundamental unit of norm $1$ of $\Z[\sqrt2]$, which acts here as the map $(x,k)\mapsto(3x+4k,\,2x+3k)$: if $(x,k)$ solves \eqref{eq:pell}, then
\[
(3x+4k)^2-2(2x+3k)^2
=(9-8)x^2+(24-24)xk+(16-18)k^2
=x^2-2k^2 ,
\]
so the image is again a solution. Note that this map preserves the parities of $x$ and $k$, so that the constraint that $x$ be odd is automatically maintained. The next lemma shows that this action accounts for all solutions; the descent argument we give is elementary and self-contained, although the statement can also be deduced from the general theory of norm equations in real quadratic fields, for which we refer to \cite[Chapter~17]{IrelandRosen}.

\begin{lemma}\label{lem:pell}
The positive solutions of \eqref{eq:pell} form two infinite orbits of the map $(x,k)\mapsto(3x+4k,\,2x+3k)$, with seeds $(1,2)$ and $(5,4)$. In every solution $x$ is odd and $k$ is even, and in terms of $m=(x-1)/2$ the admissible values are
\[
m=0,\ 2,\ 5,\ 15,\ 32,\ 90,\ 189,\ 527,\ 1104,\ 3074,\ 6437,\ 17919,\ 37520,\ \dots
\]
\end{lemma}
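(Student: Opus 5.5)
Parity comes first. If $x^2-2k^2=-7$, then $x^2\equiv -7\equiv 1\pmod 2$, so $x$ is odd and $x^2\equiv 1\pmod 8$. Then $2k^2=x^2+7\equiv 0\pmod 8$, so $k^2\equiv 0\pmod 4$ and $k$ is even. This settles the second claim.

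For the orbit structure I use an explicit descent with the inverse map $T^{-1}(x,k)=(3x-4k,\,-2x+3k)$, which also preserves $x^2-2k^2$. Take a positive solution $(x,k)$ with $k>4$. I will show that $x'=3x-4k$ and $k'=3k-2x$ satisfy $|x'|\ge 1$, $k'>0$ and $k'<k$.
\begin{itemize}
\item Since $x^2=2k^2-7<2k^2$, we have $x<\sqrt2\,k<\tfrac32 k$, so $k'=3k-2x>0$.
\item $k'<k$ is equivalent to $x>k$, which follows from $x^2=2k^2-7>k^2$ once $k^2>7$, i.e.\ $k\ge3$.
\end{itemize}
The value $x'$ may be negative. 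In that case I replace $(x',k')$ by $(-x',k')$, which is again a positive solution. This sign flip is the step needing care. It corresponds to conjugation-type symmetry: $(-x,k)\mapsto T(-x,k)=(-3x+4k,\,-2x+3k)$ relates the two orbits. So the orbits should be taken in $\Z[\sqrt2]$ up to sign of $x$, and I must check that the flip sends the orbit of $(1,2)$ to that of $(5,4)$ and vice versa.

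Indeed $T(-1,2)=(5,4)$ and $T(-5,4)=(1,2)$. Hence the descent, with sign normalization, is a strictly decreasing sequence in $k$. It terminates at a solution with $k\le 4$, and checking $k=2,4$ gives exactly $(1,2)$ and $(5,4)$ (while $k=1,3$ give no solution). Reversing the steps shows that every positive solution arises from one of these seeds by iterating $T$, possibly after one sign flip at the bottom. Since $T(-1,2)=(5,4)$ and $T(-5,4)=(1,2)$, the flipped chains land back in the $T$-orbits of the two seeds. So the positive solutions are exactly the union of the forward $T$-orbits of $(1,2)$ and $(5,4)$.

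These orbits are disjoint. Along each orbit $k$ strictly increases, and the inverse descent from any point reaches a unique seed, because $T^{-1}$ is deterministic and stays positive above the seeds. They are infinite because $T$ strictly increases $x$.

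Finally, the list of $m$ values comes from computing the orbits:
\begin{itemize}
\item $(1,2)\to(11,8)\to(65,46)\to\cdots$,
\item $(5,4)\to(31,22)\to(181,128)\to\cdots$.
\end{itemize}
Interleaving by size gives $x=1,5,11,31,65,181,\dots$, and $m=(x-1)/2$ gives $0,2,5,15,32,90,\dots$, matching the stated sequence.

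The main obstacle is the bookkeeping at the bottom of the descent: ensuring that the sign flip never produces a third seed, and that the two orbits do not merge. This reduces to the finite check at $k\le4$ together with the observation that $k>4$ forces $k'<k$.
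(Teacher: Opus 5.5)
Your parity argument, the bounds $0<k'<k$ for $k\ge3$, the finite check at $k\le4$, and the orbit computations are correct. Your strategy of descending by $T^{-1}(x,k)=(3x-4k,\,3k-2x)$ down to the seeds is the same as the paper's. The gap is the sign flip you flagged yourself.

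You allow $x'=3x-4k$ to be negative at any stage and then replace it by $-x'$. Reversing such a step means applying $T\sigma$ rather than $T$, where $\sigma(x,k)=(-x,k)$. So the descent, as you have argued it, only shows that every positive solution is obtained from a seed by some word in $T$ and $T\sigma$. Your sentence that it arises by iterating $T$ ``possibly after one sign flip at the bottom'' does not follow. Nothing you proved rules out flips at intermediate levels, and the identities $T(-1,2)=(5,4)$, $T(-5,4)=(1,2)$ only address a flip applied directly to a seed. That case is in fact vacuous for your descent: if $(-x',k')$ were a seed, then $(x,k)=T(x',k')\in\{(5,4),(1,2)\}$, contradicting $k>4$. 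Hence your closing claim, that everything reduces to the check at $k\le4$ plus $k'<k$, is not accurate.

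What is missing is a proof that no flip ever occurs. There are two ways to get it.
\begin{itemize}
\item The paper's way: $k$ is even, so $k>4$ means $k\ge6$. Then $x^2=2k^2-7>\tfrac{16}{9}k^2$ (equivalent to $2k^2>63$), so $3x>4k$ and $x'>0$.
\item Alternatively, use the relation $T\sigma=\sigma T^{-1}$, which is already visible in your formula $T(-x,k)=(-3x+4k,\,-2x+3k)$. Argue by strong induction on $k$. If a positive $(x,k)$ with $k\ge6$ had $x'<0$, then $(-x',k')=T^j(s)$ for a seed $s$ and some $j\ge0$. So $(x,k)=T\sigma T^j(s)=\sigma T^{j-1}(s)$. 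This has negative first coordinate if $j\ge1$, and lies in $\{(5,4),(1,2)\}$ if $j=0$; both are impossible.
\end{itemize}
With either repair the descent stays among positive solutions (note $x'\ne0$ automatically, since $x'$ is odd). The rest of your argument then goes through: termination at $(1,2)$ or $(5,4)$, disjointness, infinitude, and the list of values of $m$.
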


\begin{proof}
If $(x,k)$ solves \eqref{eq:pell}, then $x^2=2k^2-7$ is odd, so $x$ is odd; consequently $x^2\equiv1\pmod 8$, whence $2k^2=x^2+7\equiv0\pmod 8$ and $k$ is even. That the two announced orbits consist of solutions follows from the invariance computed above, together with $1^2-2\cdot2^2=5^2-2\cdot4^2=-7$.

For the converse we use descent. It is convenient to allow $x\in\Z$ of either sign for a moment, keeping $k>0$; note that $(x,k)$ solves \eqref{eq:pell} if and only if $(-x,k)$ does. Given such a solution, put
\[
(x',k')=(3x-4k,\,3k-2x),
\]
which again satisfies $x'^2-2k'^2=x^2-2k^2=-7$ by the same computation. Moreover $(x,k)=(3x'+4k',\,2x'+3k')$, so the two maps are mutually inverse.

From $x^2=2k^2-7<2k^2$ we get $|x|<k\sqrt2$, whence
\[
k'=3k-2x\geq 3k-2|x|>3k-2\sqrt2\,k=(3-2\sqrt2)k>0 ,
\]
so that $k'$ is again positive. Furthermore $k'<k$ if and only if $2k<2x$, that is, if and only if $x>k$; and since $x^2=2k^2-7$, for $x>0$ this happens exactly when $k^2>7$, that is, when $k\geq3$.

For a positive solution with $k\geq6$, we also have $x'>0$: indeed,
\[
x^2=2k^2-7>\frac{16}{9}k^2
\]
is equivalent to $2k^2>63$, and hence $3x>4k$. Thus every positive solution with $k\geq6$ descends to another positive solution with a strictly smaller second coordinate. The only positive solutions with $k<6$ are $(1,2)$ and $(5,4)$, and the latter descends to $(-1,2)$. Reversing the descent, every positive solution is obtained from either $(1,2)$ or $(5,4)$ by iterations of $(x,k)\mapsto(3x+4k,2x+3k)$. These are precisely the two announced orbits.  
\end{proof}

Combining the last two results we obtain infinitely many counterexamples to Conjecture~\ref{conj:two}.

\begin{corollary}\label{cor:infinite-family}
Let $(x_j,k_j)_{j\geq0}$ be either of the two orbits of Lemma~\ref{lem:pell}, and set $m_j=(x_j-1)/2$ and $a_j=m_j(m_j+1)$. Then, for every $j$ with $m_j\geq1$, the generalized sun graph
\[
\Sun_6(a_j)=C_{6,1}(0,a_j,a_j,2a_j,a_j,a_j)
\]
is an integral unicyclic graph on $6a_j+6$ vertices whose unique cycle has length six, with spectrum
\[
\Bigl\{0^{[6a_j-4]},\ \pm m_j^{[2]},\ \pm(m_j+1)^{[2]},\ \pm k_j\Bigr\}.
\]
In particular, Conjecture~\ref{conj:two} fails for infinitely many graphs.
\end{corollary}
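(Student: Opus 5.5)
The corollary follows by combining Theorem~\ref{thm:integrality-criterion} with Lemma~\ref{lem:pell}.

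First, fix one of the two orbits $(x_j,k_j)_{j\geq0}$ and an index $j$ with $m_j\geq1$. By Lemma~\ref{lem:pell}, $x_j$ is odd, so $m_j=(x_j-1)/2$ is a nonnegative integer and $a_j=m_j(m_j+1)$ is a positive integer. Substituting $x_j=2m_j+1$ into $x_j^2-2k_j^2=-7$ and dividing by $2$ gives
\[
2m_j^2+2m_j+4=k_j^2 .
\]
Moreover $k_j\geq1$, since the orbits consist of positive solutions. Thus the pair $(m_j,k_j)$ satisfies \eqref{eq:integrality-conditions} with $a=a_j$. Theorem~\ref{thm:integrality-criterion} then shows that $\Sun_6(a_j)$ is integral, has order $6a_j+6$, and has the spectrum stated in the corollary.

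Next, the structural claims are immediate from the construction. $\Sun_6(a_j)$ is obtained from $C_6$ by attaching pendant vertices, so it is connected and unicyclic, and its unique cycle is the hexagon. Since $a_j\geq2>0$, it is not a cycle. Its cycle length $6$ is not a multiple of $4$, so each such graph contradicts Conjecture~\ref{conj:two}.

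Finally, for infinitely many counterexamples, take either orbit. The map $(x,k)\mapsto(3x+4k,\,2x+3k)$ strictly increases $x$ on positive solutions, so the values $x_j$, and hence $m_j$, are strictly increasing. Therefore all but at most the first term (the seed $(1,2)$, which gives $m=0$) have $m_j\geq1$. Since $a\mapsto 6a+6$ is injective, the resulting graphs have pairwise distinct orders and are pairwise non-isomorphic. There is no real obstacle here; the only point needing care is excluding $m_j=0$, which the hypothesis $m_j\geq1$ handles.
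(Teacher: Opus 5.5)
Your proposal is correct and follows the paper's own argument: the paper also derives the corollary directly from Theorem~\ref{thm:integrality-criterion} and Lemma~\ref{lem:pell}, using the substitution $x_j=2m_j+1$ to get $2m_j^2+2m_j+4=k_j^2$. Your extra remarks, that the orbit map strictly increases $x$ and that the resulting orders $6a_j+6$ are distinct, just spell out the infinitude that the paper treats as immediate.
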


\begin{proof}
Immediate from Theorem~\ref{thm:integrality-criterion} and Lemma~\ref{lem:pell}, since $a_j=m_j(m_j+1)$ and $2m_j^2+2m_j+4=k_j^2$ by construction.  
\end{proof}

The value $m=0$, which corresponds to the seed $(1,2)$ and to $a=0$, has been excluded from the statements only for simplicity: it describes the hexagon $C_6$ itself, which is of course integral, with spectrum $\{\pm2,\pm1^{[2]}\}$. The smallest genuine member of the family is the graph of Proposition~\ref{prop:counterexample}, obtained for $m=2$; Table~\ref{tab:family} lists the first few.

\begin{table}[ht]
\centering
\caption{The first integral generalized sun graphs $\Sun_6(a)=C_{6,1}(0,a,a,2a,a,a)$, where $a=m(m+1)$.}
\label{tab:family}
\begin{tabular}{@{}rrrrrl@{}}
\toprule
$m$ & $(x,k)$ & $a$ & $2a$ & order & spectrum\\
\midrule
$2$ & $(5,4)$ & $6$ & $12$ & $42$ & $\{0^{[32]},\pm2^{[2]},\pm3^{[2]},\pm4\}$\\
$5$ & $(11,8)$ & $30$ & $60$ & $186$ & $\{0^{[176]},\pm5^{[2]},\pm6^{[2]},\pm8\}$\\
$15$ & $(31,22)$ & $240$ & $480$ & $1446$ & $\{0^{[1436]},\pm15^{[2]},\pm16^{[2]},\pm22\}$\\
$32$ & $(65,46)$ & $1056$ & $2112$ & $6342$ & $\{0^{[6332]},\pm32^{[2]},\pm33^{[2]},\pm46\}$\\
$90$ & $(181,128)$ & $8190$ & $16380$ & $49146$ & $\{0^{[49136]},\pm90^{[2]},\pm91^{[2]},\pm128\}$\\
$189$ & $(379,268)$ & $35910$ & $71820$ & $215466$ & $\{0^{[215456]},\pm189^{[2]},\pm190^{[2]},\pm268\}$\\
\bottomrule
\end{tabular}
\end{table}

\subsection{The same shape on an arbitrary even cycle}\label{sec:general}

Nothing in the definition of $\Sun_6(a)$ is special to the hexagon. The
computation of Theorem~\ref{thm:char-poly-hexagon} relies only on the facts
that the cycle has even length, that two antipodal vertices are unbalanced, and
that all remaining vertices carry the same number of pendant vertices. It is
therefore natural to ask on which even cycles this shape produces integral
graphs. The answer, which is the main result of this section, is that only the
hexagon and the square do, so that no cycle of length $8,12,16,\dots$ occurs
even though all of these are multiples of four.

\begin{definition}\label{def:family}
Let $h\geq2$ and $a\geq1$ be integers. We denote by $\Sun_{2h}(a)$ the generalized sun graph
\[
\Sun_{2h}(a):=C_{2h,1}(n_1,\ldots,n_{2h}),
\qquad
n_1=0,
\quad
n_{h+1}=2a,
\quad
n_k=a \ \text{ for } k\notin\{1,h+1\},
\]
obtained from the cycle $C_{2h}=v_1v_2\cdots v_{2h}v_1$ by attaching no pendant vertex to $v_1$, exactly $2a$ pendant vertices to the antipodal vertex $v_{h+1}$, and exactly $a$ pendant vertices to each of the remaining $2h-2$ cycle vertices.
\end{definition}

For $h=2$ this is the family $C_{4,1}(0,a,2a,a)$ considered by Veras \cite{veras}, and for $h=3$ it is the family of Section~\ref{sec:hexagon}. In general, $\Sun_{2h}(a)$ is a bipartite unicyclic graph of order
\[
n=2h+\bigl((2h-2)a+2a\bigr)=2h(a+1),
\]
with maximum degree $2a+2$. The polynomials that govern its spectrum are the following analogues of the Chebyshev polynomials of the second kind, which arise as the continuants of a tridiagonal matrix with constant diagonal $\lambda^2-a$ and constant off-diagonal $-\lambda$; for $h=3$ we shall recover the factor $(\lambda^2-a)^2-\lambda^2$ met in Theorem~\ref{thm:char-poly-hexagon}.

\begin{definition}\label{def:U}
For a fixed integer $a\geq1$, define a sequence of polynomials $U_j=U_j(\lambda)$ by
\[
U_0=1,
\qquad
U_1=\lambda^2-a,
\qquad
U_j=(\lambda^2-a)U_{j-1}-\lambda^2U_{j-2}
\quad (j\geq2).
\]
\end{definition}

The polynomial $U_j$ has degree $2j$, and its roots admit a closed expression, which we record now since it is what will eventually decide integrality.

\begin{lemma}\label{lem:U-factorization}
For every integer $j\geq1$,
\[
U_j(\lambda)=\prod_{k=1}^{j}\left(\lambda^2-2\cos\frac{k\pi}{j+1}\,\lambda-a\right).
\]
\end{lemma}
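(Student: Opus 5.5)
The plan is to recognize $U_j$ as the determinant of a $j\times j$ tridiagonal Toeplitz matrix and to diagonalize that matrix explicitly. Let $T_j$ be the $j\times j$ tridiagonal matrix with diagonal entries $\lambda^2-a$ and off-diagonal entries $-\lambda$. Expanding $\det T_j$ along its last row gives the continuant recurrence
\[
\det T_j=(\lambda^2-a)\det T_{j-1}-\lambda^2\det T_{j-2},
\]
with $\det T_1=\lambda^2-a$. Setting $\det T_0=1$, this matches Definition~\ref{def:U}, so $U_j=\det T_j$ for every $j\geq1$ by induction.

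Next, I would write $T_j=(\lambda^2-a)I_j-\lambda A(P_j)$, where $A(P_j)$ is the adjacency matrix of the path on $j$ vertices. The eigenvalues of $A(P_j)$ are the classical values $2\cos\frac{k\pi}{j+1}$ for $k=1,\dots,j$, with eigenvectors $\bigl(\sin\frac{ik\pi}{j+1}\bigr)_{i=1}^{j}$. Since $T_j$ is a polynomial in $A(P_j)$ with coefficients in $\R[\lambda]$, it is simultaneously diagonalized by the same orthogonal eigenbasis. The eigenvalues of $T_j$ are therefore $\lambda^2-a-2\cos\frac{k\pi}{j+1}\lambda$, and taking the product gives
\[
U_j=\prod_{k=1}^{j}\Bigl(\lambda^2-2\cos\tfrac{k\pi}{j+1}\lambda-a\Bigr).
\]
This holds as an identity in $\lambda$: it holds for every real $\lambda$, and both sides are polynomials.

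As an alternative check that avoids linear algebra, I would use the Chebyshev identity. Since $a\geq1$, we have $\lambda\neq0$ on the relevant domain, and we can set $V_j(\lambda)=\lambda^{-j}U_j$. The recurrence then becomes
\[
V_j=2xV_{j-1}-V_{j-2},\qquad 2x=\frac{\lambda^2-a}{\lambda},
\]
with $V_0=1$ and $V_1=2x$. Hence $V_j=U_j^{\mathrm{Cheb}}(x)=\prod_{k}\bigl(2x-2\cos\frac{k\pi}{j+1}\bigr)$. Multiplying by $\lambda^j$ gives the same product, and the identity extends to $\lambda=0$ by polynomial continuity.

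The only step requiring care is the bookkeeping that matches the recurrence and its initial conditions with the determinant (or Chebyshev) normalization. The spectrum of the path is standard, so I expect no real obstacle.
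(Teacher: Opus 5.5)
Your proposal is correct and matches the paper's proof: identify $U_j=\det T_j$ by last-row expansion, write $T_j=(\lambda^2-a)I_j-\lambda A(P_j)$, and read off the factors from the path eigenvalues $2\cos\frac{k\pi}{j+1}$. Your Chebyshev cross-check through $V_j=\lambda^{-j}U_j$ is also valid, though it is not needed, and it uses only $\lambda\neq0$ (not $a\geq1$) before extending the identity by continuity.
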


\begin{proof}
Let $T_j$ denote the $j\times j$ tridiagonal matrix with all diagonal entries equal to $\lambda^2-a$ and all off-diagonal entries equal to $-\lambda$. Expanding $\det T_j$ along its last row gives exactly the recurrence of Definition~\ref{def:U}, with the same initial values, so $U_j=\det T_j$. On the other hand, $T_j=(\lambda^2-a)I_j-\lambda A(P_j)$, and the eigenvalues of the path $P_j$ are $2\cos\frac{k\pi}{j+1}$ for $1\le k\le j$. Hence
\[
U_j=\det\bigl((\lambda^2-a)I_j-\lambda A(P_j)\bigr)
=\prod_{k=1}^{j}\left(\lambda^2-a-2\cos\frac{k\pi}{j+1}\,\lambda\right),
\]
which is the assertion. 
\end{proof}

We can now compute the characteristic polynomial of $\Sun_{2h}(a)$ for every $h$ and every $a$. The proof follows the pattern of Theorem~\ref{thm:char-poly-hexagon}: the reflection symmetry splits the determinant of Lemma~\ref{lem:reduction} into two tridiagonal blocks. What is new is that the larger block can no longer be expanded by hand, and the unbalanced choice of $n_1=0$ and $n_{h+1}=2a$ is exactly what makes it collapse onto the smaller one.

\begin{theorem}\label{thm:general-char-poly}
Let $h\geq2$ and $a\geq1$ be integers, and let $n=2h(a+1)$ be the order of $\Sun_{2h}(a)$. Then
\begin{equation}\label{eq:general-char-poly}
p_{\Sun_{2h}(a)}(\lambda)
=
\lambda^{\,2h(a-1)+2}\,\bigl(\lambda^2-2a-4\bigr)\,U_{h-1}(\lambda)^2 .
\end{equation}
\end{theorem}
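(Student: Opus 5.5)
We follow the pattern of Theorem~\ref{thm:char-poly-hexagon}. By Lemma~\ref{lem:reduction} with $b=2h$ and $n=2h(a+1)$, we have $p_{\Sun_{2h}(a)}(\lambda)=\lambda^{2h(a-1)}\det Q$, where $Q=zI_{2h}-\lambda C-D$, $z=\lambda^2$, $C=A(C_{2h})$ and $D=\diag(n_1,\dots,n_{2h})$. It therefore suffices to prove
\[
\det Q=z\,(z-2a-4)\,U_{h-1}^2 .
\]
The reflection fixing $v_1$ and $v_{h+1}$ and exchanging $v_k\leftrightarrow v_{2h+2-k}$ preserves $C$ and $D$. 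We use the orthonormal basis consisting of the antisymmetric vectors $(e_k-e_{2h+2-k})/\sqrt2$ for $2\le k\le h$, and the symmetric vectors $e_1$, $(e_k+e_{2h+2-k})/\sqrt2$ for $2\le k\le h$, and $e_{h+1}$. In this basis $Q$ splits as $K_-\oplus K_+$.

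On the antisymmetric part, the vectors for $k=2,\dots,h$ form a path. The diagonal entries are $z-a$ and the off-diagonal entries are $-\lambda$, with no coupling to $e_1$ or $e_{h+1}$, because those couplings cancel. Hence $K_-=T_{h-1}$, and $\det K_-=U_{h-1}$ by the proof of Lemma~\ref{lem:U-factorization}. On the symmetric part, $K_+$ is the $(h+1)\times(h+1)$ tridiagonal matrix with diagonal $(z,\,z-a,\dots,z-a,\,z-2a)$. Its two end off-diagonal entries are $-\sqrt2\lambda$ and the interior ones are $-\lambda$. When $h=2$ the middle block has a single entry and both off-diagonals equal $-\sqrt2\lambda$.

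The main step is to show $\det K_+=z(z-2a-4)U_{h-1}$. We expand along the first and last rows, writing everything in terms of continuants of the constant middle block. Let $U_j$ denote the continuant of length $j$, with $U_{-1}:=0$. Expanding the first row gives, for the leading principal minors, $\Delta_{j+1}=zU_j-2zU_{j-1}=z(U_j-2U_{j-1})$ for $j\le h-1$. Expanding the last row then gives
\[
\det K_+=(z-2a)\Delta_h-2z\Delta_{h-1}
=z\bigl[(z-2a)(U_{h-1}-2U_{h-2})-2z(U_{h-2}-2U_{h-3})\bigr].
\]
We want this to equal $z(z-2a-4)U_{h-1}$, so it suffices that the bracket minus $(z-2a)U_{h-1}$ equals $-4U_{h-1}$. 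That is, we need
\[
-2(z-2a)U_{h-2}-2zU_{h-2}+4zU_{h-3}=-4U_{h-1}.
\]
The left side is $-4(z-a)U_{h-2}+4zU_{h-3}$, which is exactly $-4U_{h-1}$ by the recurrence of Definition~\ref{def:U}. For $h=2$ the same identity holds with $U_{-1}=0$, and we check $h=2$ directly to cover the degenerate middle block. This is the place where the unbalanced choice $n_1=0$, $n_{h+1}=2a$ makes the determinant collapse.

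Multiplying the two blocks gives $\det Q=zU_{h-1}\cdot(z-2a-4)U_{h-1}$. Multiplying by $\lambda^{2h(a-1)}$ yields \eqref{eq:general-char-poly}, with exponent $2h(a-1)+2$. The only real risk is bookkeeping at the ends, namely the $\sqrt2$ factors and the $h=2$ case. We would verify these against Theorem~\ref{thm:char-poly-hexagon} at $h=3$, where $U_2=(z-a)^2-z$.
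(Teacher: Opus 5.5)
Your proof is correct and follows the paper's argument: the reduction lemma, the reflection splitting into $T_{h-1}$ and the tridiagonal block $K_+$, continuant formulas for the leading minors, and a last-row expansion collapsed by the recurrence of Definition~\ref{def:U}. The only difference is cosmetic. You obtain $\Delta_{j+1}=z(U_j-2U_{j-1})$ directly by expanding along the first row, whereas the paper verifies the equivalent form $\Delta_j=2U_j+(2a-z)U_{j-1}$ against the recurrence; the two agree after one application of $zU_{j-2}=(z-a)U_{j-1}-U_j$.
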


\begin{proof}
Write $b=2h$, $C=A(C_b)$ and $D=\diag(0,a,\dots,a,2a,a,\dots,a)$ as in Definition~\ref{def:family}, and set
\[
Q:=\lambda^2I_b-\lambda C-D,
\qquad
z:=\lambda^2 .
\]
By Lemma~\ref{lem:reduction} we have $p_{\Sun_{2h}(a)}(\lambda)=\lambda^{\,n-2b}\det Q$ with $n-2b=2h(a+1)-4h=2h(a-1)$, so it suffices to prove that
\begin{equation}\label{eq:detQ}
\det Q=z\,(z-2a-4)\,U_{h-1}^2 .
\end{equation}

The reflection of the cycle which fixes $v_1$ and $v_{h+1}$ and exchanges $v_{1+i}\leftrightarrow v_{1-i}$ for every $i$ preserves both $C$ and $D$, because the entries of $D$ were chosen symmetric with respect to it. Hence $Q$ is block diagonalized by the orthonormal basis of $\R^{b}$ formed by the $h-1$ antisymmetric vectors
\[
\frac{e_{1+i}-e_{1-i}}{\sqrt2},
\qquad 1\le i\le h-1
\]
(indices being read modulo $b$), together with the $h+1$ symmetric vectors
\[
e_1,\qquad
\frac{e_{1+i}+e_{1-i}}{\sqrt2}\ \ (1\le i\le h-1),
\qquad
e_{h+1} .
\]
On the antisymmetric subspace, the two vertices $v_1$ and $v_{h+1}$ contribute nothing, all diagonal entries equal $z-a$ and all off-diagonal entries equal $-\lambda$; the corresponding block is therefore the matrix $T_{h-1}$ of the proof of Lemma~\ref{lem:U-factorization}, so that its determinant is $U_{h-1}$. On the symmetric subspace one obtains the $(h+1)\times(h+1)$ tridiagonal matrix
\[
K_+=
\begin{pmatrix}
z & -\sqrt2\lambda & & & \\
-\sqrt2\lambda & z-a & -\lambda & & \\
 & \ddots & \ddots & \ddots & \\
 & & -\lambda & z-a & -\sqrt2\lambda\\
 & & & -\sqrt2\lambda & z-2a
\end{pmatrix},
\]
the two entries $\pm\sqrt2$ appearing because $e_1$ and $e_{h+1}$ are each adjacent to a symmetrized pair of cycle vertices. Thus
\begin{equation}\label{eq:detQ-split}
\det Q=U_{h-1}\cdot\det K_+ ,
\end{equation}
and \eqref{eq:detQ} amounts to the identity $\det K_+=z(z-2a-4)U_{h-1}$.

Let $\Delta_j$ denote the $j$-th leading principal minor of $K_+$. Expanding along the last row we get $\Delta_1=z$, then $\Delta_2=(z-a)z-2\lambda^2=z(z-a-2)$, and
\begin{equation}\label{eq:Delta-rec}
\Delta_j=(z-a)\Delta_{j-1}-z\,\Delta_{j-2}
\qquad (3\le j\le h),
\end{equation}
since $\lambda^2=z$, while the last step involves the two modified entries and reads
\begin{equation}\label{eq:Delta-last}
\Delta_{h+1}=(z-2a)\Delta_{h}-2z\,\Delta_{h-1} .
\end{equation}
The explicit expression
\begin{equation}\label{eq:Delta-explicit}
\Delta_j=2U_j+(2a-z)U_{j-1}
\qquad (1\le j\le h).
\end{equation}
follows directly from the recurrence. Indeed, its right-hand side satisfies \eqref{eq:Delta-rec}, and for $j=1,2$ it equals respectively
\[
2(z-a)+(2a-z)=z
\]
and
\[
2\bigl[(z-a)^2-z\bigr]+(2a-z)(z-a)=z(z-a-2),
\]
which are $\Delta_1$ and $\Delta_2$.

We finally insert \eqref{eq:Delta-explicit} into \eqref{eq:Delta-last}. This gives
\[
\Delta_{h+1}
=(z-2a)\bigl[2U_h+(2a-z)U_{h-1}\bigr]-2z\bigl[2U_{h-1}+(2a-z)U_{h-2}\bigr],
\]
that is,
\[
\Delta_{h+1}
=2(z-2a)U_h-(z-2a)^2U_{h-1}-4zU_{h-1}+2(z-2a)\,zU_{h-2} .
\]
The recurrence of Definition~\ref{def:U} in the form $zU_{h-2}=(z-a)U_{h-1}-U_h$ turns the last summand into $2(z-2a)\bigl[(z-a)U_{h-1}-U_h\bigr]$, so the two multiples of $U_h$ cancel and we obtain
\[
\Delta_{h+1}
=U_{h-1}\Bigl[-(z-2a)^2-4z+2(z-2a)(z-a)\Bigr]
=U_{h-1}\bigl[z^2-2az-4z\bigr]
=z\,(z-2a-4)\,U_{h-1}.
\]
Combining this with \eqref{eq:detQ-split} proves \eqref{eq:detQ}, and hence the theorem. 
\end{proof}

For $h=3$ we have $U_2=(\lambda^2-a)^2-\lambda^2$, and \eqref{eq:general-char-poly} reduces to \eqref{eq:char-poly-hexagon}, as it must. In general, formula \eqref{eq:general-char-poly} makes the integrality of $\Sun_{2h}(a)$ transparent: apart from the eigenvalue $0$, the spectrum consists of the two roots of $\lambda^2-2a-4$ together with the roots of $U_{h-1}$, each of the latter counted twice. The first factor imposes one Diophantine condition, which for $h=3$ was the Pell equation \eqref{eq:pell}; the second one, by Lemma~\ref{lem:U-factorization}, imposes conditions of a completely different nature, which turn out to be so restrictive that they leave only two possible cycle lengths.

\begin{theorem}\label{thm:only-4-and-6}
Let $h\geq2$ and $a\geq1$ be integers. If $\Sun_{2h}(a)$ is integral, then $h\in\{2,3\}$, that is, the cycle of $\Sun_{2h}(a)$ is either a square or a hexagon.
\end{theorem}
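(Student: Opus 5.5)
By Theorem~\ref{thm:general-char-poly}, if $\Sun_{2h}(a)$ is integral then every root of $U_{h-1}$ is an integer. By Lemma~\ref{lem:U-factorization} these roots are the roots of the quadratics
\[
\lambda^2-2\cos\tfrac{k\pi}{h}\,\lambda-a,\qquad 1\le k\le h-1 .
\]
The plan is to show that, once $h\ge4$, some of these quadratics cannot have integer roots.

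First we use the coefficients of a single factor. If $\lambda^2-c\lambda-a$ has roots $r,s$, then $r+s=c$ and $rs=-a$. Suppose both roots are integers, which is forced whenever the factor's roots are eigenvalues (they are real, since the discriminant $c^2+4a>0$). Then $c=2\cos(k\pi/h)$ must be an integer. Moreover $a\ge1$ gives $rs\neq0$, so there is no interference with the zero eigenvalue. For $|c|<2$, being an integer means $2\cos(k\pi/h)\in\{-1,0,1\}$.

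Next we choose the factor $k=1$. Since $h\ge2$, we have $0<\pi/h\le\pi/2$, so $c=2\cos(\pi/h)\in[0,2)$. The integer values available are $c=0$, giving $\pi/h=\pi/2$ and hence $h=2$, and $c=1$, giving $\pi/h=\pi/3$ and hence $h=3$. For $h\ge4$ we get $2\cos(\pi/h)\in[\sqrt2,2)$, which is not an integer. This contradicts integrality, so only $h\in\{2,3\}$ survive.

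One point needs care: we cannot simply read off integrality of individual factors from the product, because the factors for different $k$ have distinct, possibly irrational, coefficients. The way around this is to argue root by root. Every root $r$ of the $k=1$ factor is an eigenvalue, hence an integer. Its partner root is $s=-a/r$, which is also an eigenvalue (it is a root of the same factor) and hence also an integer. Therefore $r+s=2\cos(\pi/h)$ is an integer. No further obstacle is expected; the step most worth stating explicitly is that both roots of the chosen quadratic factor appear in the spectrum, which is immediate from the factorization \eqref{eq:general-char-poly}.
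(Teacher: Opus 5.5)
Your proof is correct and is essentially the paper's argument. Both use Theorem~\ref{thm:general-char-poly} and Lemma~\ref{lem:U-factorization} to see that the two roots of the $k=1$ factor $\lambda^2-2\cos\frac{\pi}{h}\,\lambda-a$ are eigenvalues, hence integers, so their sum $2\cos\frac{\pi}{h}$ must be an integer; this fails for $h\geq4$, since that sum lies in $[\sqrt2,2)$. Your root-by-root justification usefully spells out a step the paper states in one line.
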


\begin{proof}
Suppose that $\Sun_{2h}(a)$ is integral. By Theorem~\ref{thm:general-char-poly}, every root of $U_{h-1}$ is then an integer. By Lemma~\ref{lem:U-factorization},
\[
U_{h-1}(\lambda)=\prod_{k=1}^{h-1}\left(\lambda^2-2\cos\frac{k\pi}{h}\,\lambda-a\right),
\]
and the roots of the $k$-th factor add up to $2\cos\frac{k\pi}{h}$. Since all these roots are integers, we conclude that
\[
2\cos\frac{k\pi}{h}\in\Z
\qquad\text{for every } 1\le k\le h-1 .
\]
Taking $k=1$, if $h\geq4$ then
\[
1<2\cos\frac{\pi}{h}<2,
\]
which is impossible for an integer. Hence $h\leq3$, and the assumption $h\geq2$ gives $h\in\{2,3\}$.   
\end{proof}

We stress that Theorem~\ref{thm:only-4-and-6} concerns the specific shape of Definition~\ref{def:family}, and not all generalized sun graphs; the exhaustive search reported in Section~\ref{sec:search}, however, found no integral graph of the form $C_{b,1}(n_1,\dots,n_b)$ with $b\geq7$ at all within the parameter ranges of Section~\ref{sec:search}. It remains to examine the case $h=2$, which does occur and produces a second infinite family.

For $h=2$ we have $\Sun_4(a)=C_{4,1}(0,a,2a,a)$, of order $4a+4$, and $U_1=\lambda^2-a$, so that \eqref{eq:general-char-poly} reads
\begin{equation}\label{eq:char-poly-square}
p_{\Sun_4(a)}(\lambda)=\lambda^{4a-2}\,(\lambda^2-a)^2\,(\lambda^2-2a-4).
\end{equation}
This family does not contradict Conjecture~\ref{conj:two}, but it is of independent interest. The three infinite families constructed in \cite{BRAGA2021281} have copies of $P_2$ attached, whereas here only pendant vertices are used. On the other hand, \cite[Corollary 4.3]{BRAGA2021281}, with the help of a result of de Lima, Mohammadian and Oliveira \cite{deLima2020}, identified one integral graph on $13$ vertices among the graphs $C_{4,1}(p,0,q,0)$, in which pendant vertices are attached to two nonadjacent cycle vertices only. Allowing a third cycle vertex to carry pendant vertices produces the infinite family above, which was first reported in \cite{veras}. For the sake of completeness, we provide a proof of the following result.

\begin{theorem}\label{thm:square-criterion}
Let $a\geq1$ be an integer. The generalized sun graph $\Sun_4(a)=C_{4,1}(0,a,2a,a)$ is integral if and only if $a$ and $2a+4$ are both perfect squares, say $a=c^2$ and $2a+4=k^2$, in which case it has $4a+4$ vertices and
\[
\sigma\bigl(\Sun_4(a)\bigr)=\Bigl\{0^{[4a-2]},\ \pm c^{[2]},\ \pm k\Bigr\}.
\]
Moreover, writing $c=2c'$ and $k=2k'$, the condition $2a+4=k^2$ is equivalent to the classical Pell equation
\begin{equation}\label{eq:pell-square}
k'^2-2c'^2=1 ,
\end{equation}
which has infinitely many solutions. Consequently there are infinitely many integral graphs of the form $\Sun_4(a)$.
\end{theorem}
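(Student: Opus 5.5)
The plan is to read everything off the factorization \eqref{eq:char-poly-square}, which is the case $h=2$ of Theorem~\ref{thm:general-char-poly}. By that formula, the nonzero eigenvalues of $\Sun_4(a)$ are the roots $\pm\sqrt a$ of $\lambda^2-a$, each of multiplicity two, together with the roots $\pm\sqrt{2a+4}$ of $\lambda^2-2a-4$. The eigenvalue $0$ has multiplicity $4a-2$. Since $a\geq1$, none of these quadratic roots vanishes, so the multiplicity of $0$ is exactly the exponent $4a-2$.

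The graph is integral precisely when all these roots are integers. For a nonnegative integer $N$, $\sqrt N$ is an integer if and only if $N$ is a perfect square. Hence integrality is equivalent to $a=c^2$ and $2a+4=k^2$ with $c,k\geq1$. The spectrum $\{0^{[4a-2]},\pm c^{[2]},\pm k\}$ is then read off directly, and the order is $4+4a$ as computed after Definition~\ref{def:family}.

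For the Pell reformulation I would first establish parities. From $2c^2+4=k^2$, $k$ is even; write $k=2k'$. Then $c^2=2k'^2-2$, so $c$ is even; write $c=2c'$. This gives $4c'^2=2k'^2-2$, that is, $k'^2-2c'^2=1$. Conversely, any solution of \eqref{eq:pell-square} with $c'\geq1$ yields $a=4c'^2$ and $2a+4=8c'^2+4=4k'^2$, both perfect squares, so the equivalence holds in both directions.

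For infiniteness, I would observe that the map $(k',c')\mapsto(3k'+4c',\,2k'+3c')$ preserves $k'^2-2c'^2$; this is the same computation displayed before Lemma~\ref{lem:pell}. Starting from the solution $(3,2)$, it produces infinitely many solutions with strictly increasing $c'$: $(3,2)$, $(17,12)$, and so on. Each gives a distinct integral graph; for example, $c'=2$ gives $a=16$, which is Veras's $68$-vertex graph. I do not expect a real obstacle here. The only care needed is the parity step that justifies writing $c=2c'$ and $k=2k'$, and checking that no root collapses to $0$, which would otherwise alter the multiplicities.
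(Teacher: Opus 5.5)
Your proposal is correct and follows the paper's proof essentially step for step. You read the spectrum off \eqref{eq:char-poly-square}, use parity to get $k=2k'$, $c=2c'$ and $k'^2-2c'^2=1$, and obtain infinitely many solutions by multiplication by $3+2\sqrt2$; your map $(k',c')\mapsto(3k'+4c',\,2k'+3c')$ is exactly the paper's $(3+2\sqrt2)^j$ description, and your check that no nonzero root vanishes usefully justifies the multiplicity $4a-2$.
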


\begin{proof}
By \eqref{eq:char-poly-square} the nonzero eigenvalues are $\pm\sqrt a$, each with multiplicity two, and $\pm\sqrt{2a+4}$; they are integers exactly when $a$ and $2a+4$ are perfect squares, and the multiplicities and the order are read off from \eqref{eq:char-poly-square}.

Suppose $a=c^2$ and $2c^2+4=k^2$. Then $k^2$ is even, hence $k$ is even, say $k=2k'$, and $4k'^2=2c^2+4$ gives $c^2=2k'^2-2$, so that $c$ is even as well, say $c=2c'$. Substituting, $4c'^2=2k'^2-2$, that is, $k'^2-2c'^2=1$, which is \eqref{eq:pell-square}; the steps are reversible. Finally, \eqref{eq:pell-square} is the classical Pell equation associated with $\sqrt2$, whose positive solutions are $(k'_j,c'_j)$ defined by $k'_j+c'_j\sqrt2=(3+2\sqrt2)^j$, $j\geq0$, and are therefore infinite in number.   
\end{proof}

The first solutions of \eqref{eq:pell-square} are $(k',c')=(1,0),(3,2),(17,12),(99,70),(577,408)$, which give $a=0,16,576,19600,$ \\ $665856$ and hence graphs of orders $4,68,2308,78404$ and $2663428$. The first of them is the square $C_4$ itself; the second one, obtained for $a=16$, is the smallest genuine member of the family. It is the graph $C_{4,1}(0,16,32,16)$ on $68$ vertices, and \eqref{eq:char-poly-square} gives
\[
p_{\Sun_4(16)}(\lambda)=\lambda^{62}(\lambda^2-16)^2(\lambda^2-36),
\qquad\text{that is,}\qquad
\sigma(\Sun_4(16))=\bigl\{0^{[62]},\pm4^{[2]},\pm6\bigr\}.
\]

\section{Computational results}\label{sec:search}

Lemma~\ref{lem:reduction} provides an efficient way to test integrality for the graphs $C_{b,1}(n_1,\dots,n_b)$. Put $D=\diag(n_1,\dots,n_b)$ and $C=A(C_b)$. The roots of
\[
\det(\lambda^2I_b-\lambda C-D)
\]
are the eigenvalues of the symmetric $2b\times2b$ matrix
\begin{equation}\label{eq:symmetric-linearization}
\mathcal H(n_1,\dots,n_b)
=
\begin{pmatrix}
C&D^{1/2}\\
D^{1/2}&0
\end{pmatrix}.
\end{equation}
Thus the nonzero eigenvalues of $C_{b,1}(n_1,\dots,n_b)$ can be tested using a matrix whose order depends only on $b$, rather than on the number of pendant vertices.

We enumerated the parameter vectors up to the dihedral action on the cycle. Numerical eigenvalues of \eqref{eq:symmetric-linearization} were used only as a screening step; every candidate declared integral was then certified by computing and factoring $\det(\lambda^2I_b-\lambda C-D)$ exactly over $\Z[\lambda]$. The search covered all vectors, not identically zero, in the following boxes:
\[
\begin{aligned}
&b=3,\ n_k\leq40; \qquad
b=4,\ n_k\leq34; \qquad
b=5,\ n_k\leq30; \qquad
b=6,\ n_k\leq14;\\
&b=7,\ n_k\leq10; \qquad
b=8,\ n_k\leq9; \qquad
b=9,10,\ n_k\leq4 .
\end{aligned}
\]

For $b=4$ the search produced exactly two integral graphs: $C_{4,1}(6,0,3,0)$ on $13$ vertices, which appears in the first row of \cite[Table~1]{BRAGA2021281}, and $C_{4,1}(0,16,32,16)$ on $68$ vertices, the smallest nontrivial member of Theorem~\ref{thm:square-criterion}. For $b\notin4\Z$, the only integral graph found was $C_{6,1}(0,6,6,12,6,6)$, the $42$-vertex counterexample of Proposition~\ref{prop:counterexample}. In particular, no integral graph was found for $b\in\{3,5,7,8,9,10\}$, and for $b=6$ the counterexample is unique within the box $0\leq n_k\leq14$.

These statements are exhaustive only within the displayed coordinate-wise bounds. In particular, the search does not prove that the graph on $42$ vertices has minimum order among all counterexamples to Conjecture~\ref{conj:two}: a parameter vector with some $n_k>14$ can still define a graph of order below $42$, and generalized sun graphs containing copies of $P_2$ or both types of pendant paths are not included. What the theoretical results do prove is that the $42$-vertex graph is the smallest nontrivial member of the family $\Sun_6(a)$.

\section{Conclusions and discussion}\label{sec:conclusion}

The results presented in this paper reveal two different mechanisms governing the integrality of graphs. The first one is local: integrality is sensitive to a single pendant path of three edges and no feature of the remaining graph can compensate it. The second one is global: once long pendant paths are absent, integrality can occur but with a highly structured global configuration whose existence is controlled by Diophantine conditions. The two conjectures of Braga, Del-Vecchio and Rodrigues \cite{BRAGA2021281} illustrate this contrast well. The first one holds in a considerably stronger form than originally stated, for arbitrary simple graphs and in fact for arbitrary real symmetric matrices. The second one fails, and not sporadically but along an infinite family.

The local mechanism is the content of Theorem~\ref{thm:long-pendant-paths}. A
pendant path with at least three edges forces eigenvalues into the intervals
$(1,\rho]$ and $[-\rho,-1)$, no matter what the rest of the graph looks like: the proof uses no property of that part beyond real symmetry, and it applies to an arbitrary real symmetric matrix to which a terminal three-coordinate path is adjoined. The theorem reduces the study of integral generalized sun graphs to pendant paths of length one or two, that is, to copies of $P_1$ and $P_2$. These two types behave in opposite ways, and the eigenvalue $0$ is what
separates them. Let $G$ be a generalized sun graph, other than a cycle, whose
cycle is $C_b$. Copies of $P_2$ do not create the eigenvalue $0$: deleting a
pendant vertex together with its neighbour preserves the nullity, so if all the attachments of $G$ are copies of $P_2$, then $G$ and $C_b$ have the same nullity. If moreover $G$ is integral, then $G$ has the eigenvalue $0$, since an
integral unicyclic graph with no eigenvalue $0$ is $C_3$ or $C_6$
\cite[Theorem~1]{Omidi2009}; hence so does $C_b$, and therefore $b$ is a multiple of four, for $0\in\sigma(C_b)\iff b\equiv 0\pmod 4$. All the infinite families known before carried copies of $P_2$, which is why the evidence accumulated at cycle length four. Pendant vertices, on the contrary, create the eigenvalue $0$ in abundance: $n_k$ pendant vertices at $v_k$ contribute $n_k-1$ zero eigenvalues, so already two of them at a common cycle vertex force it. These
zero eigenvalues come from the attachments alone and carry no information about
the cycle, so the argument above breaks down and no constraint on the cycle
length survives. The graph $C_{6,1}(0,6,6,12,6,6)$ is a striking illustration,
with $32$ zero eigenvalues although $C_6$ has none. 

What makes this second mechanism tractable is Lemma~\ref{lem:reduction}, which
compresses the characteristic polynomial of $C_{b,1}(n_1,\ldots,n_b)$ into a
determinant of order $b$, regardless of the number of pendant vertices. This is
how the graph above was found. It is also what shows that the example is not
isolated: the family $\Sun_6(a)=C_{6,1}(0,a,a,2a,a,a)$ is integral exactly for the parameters described in Theorem~\ref{thm:integrality-criterion}, which reduce to the generalized Pell equation \eqref{eq:pell}. The failure of Conjecture~\ref{conj:two} is therefore an arithmetic phenomenon
rather than an isolated coincidence. The same shape makes sense on any even
cycle, and Theorem~\ref{thm:only-4-and-6} shows that within it integrality
forces the cycle to be a square or a hexagon: no cycle of length
$8,12,16,\dots$ occurs, although all of these are multiples of four. The two
surviving cases give the hexagonal family, whose first members are the graphs on $42$, $186$ and $1446$ vertices of Table~\ref{tab:family}, and the square family $\Sun_4(a)=C_{4,1}(0,a,2a,a)$, whose first member other than $C_4$ is the graph $C_{4,1}(0,16,32,16)$ on $68$ vertices. Both are infinite and both are
governed by Pell equations. They are not, however, a classification: even for
these two cycle lengths, other integral graphs exist, as the $13$-vertex graph
$C_{4,1}(6,0,3,0)$ shows on the square.

The cycle lengths that actually occur are still unknown: the present paper
shows that $4$ and $6$ do, and it is natural to ask whether $b$ must be even
and whether these are the only possibilities. A more tractable version
restricts attention to pendant vertices, where the problem is to classify the
integral graphs $C_{b,1}(n_1,\ldots,n_b)$ up to the dihedral action of the
cycle. Combining the two local types left available by
Theorem~\ref{thm:long-pendant-paths} is where our arguments stop: when both are
present, the nullity of the graph is no longer controlled by that of its cycle,
since a single pendant vertex may lower it while two at a common vertex raise
it, so the eigenvalue $0$ carries no information about the cycle length. Integral examples of this mixed type do exist, such as the $20$-vertex graph $C_4(5P_2,P_1,2P_2,P_1)$ found in \cite{BRAGA2021281}, and classifying them seems to us a natural next step. Beyond generalized sun graphs, the matrix argument of Section~\ref{sec:conj1}
suggests a question of a different nature: the terminal path $P_3$ is a rooted
graph whose attachment to an arbitrary symmetric matrix through one coordinate
already forces a nonintegral eigenvalue, and one would like to characterize the
rooted graphs with this universal property, together with the intervals
avoiding the integers that their spectra are forced to meet. This points to a
general strategy for the study of integral graphs: first identify the small
rooted configurations that are spectrally forbidden, then analyse the remaining
attachment patterns through low-dimensional matrix polynomials and the
Diophantine restrictions imposed by their factorizations.

\bigskip

\noindent\textbf{Declaration of competing interest:}
The authors declare that there is no competing interest.

\bigskip
\noindent\textbf{Acknowledgements}
Rodrigo O. Braga acknowledges the support of CNPq grant 408180/2023-4.


\end{document}